\documentclass[11pt]{amsart}

\usepackage[a4paper,margin=30mm]{geometry}
\usepackage[T1]{fontenc}
\usepackage[utf8]{inputenc}
\usepackage{lmodern}
\usepackage{microtype}
\usepackage{amsmath,amssymb,amsthm,mathtools}
\usepackage{enumitem}
\usepackage{hyperref}

\hypersetup{
  colorlinks=true,
  linkcolor=blue,
  citecolor=blue,
  urlcolor=blue
}

\newtheorem{theorem}{Theorem}[section]
\newtheorem{proposition}[theorem]{Proposition}
\newtheorem{lemma}[theorem]{Lemma}
\newtheorem{corollary}[theorem]{Corollary}
\newtheorem{conjecture}[theorem]{Conjecture}
\newtheorem{problem}[theorem]{Problem}

\theoremstyle{definition}
\newtheorem{definition}[theorem]{Definition}
\theoremstyle{remark}
\newtheorem{remark}[theorem]{Remark}

\newcommand{\Z}{\mathbb Z}

\newcommand{\supp}{\operatorname{supp}}
\newcommand{\Hom}{\operatorname{Hom}}
\newcommand{\Prob}{\operatorname{Prob}}
\newcommand{\Ree}{\operatorname{Re}}

\title[A product-difference theorem in $\Z^2$]
{A Uniform Product-Difference Theorem\\
for Dense Subsets of $\mathbb Z^2$}
\author{Sayan Goswami}
\author{Chunlin Liu}
\thanks{This work was supported in part by the NBHM Postdoctoral Fellowship
(Reference No.~0204/27/(27)/2023/R \& D-II/11927), the Postdoctoral Fellowship
Program of the China Postdoctoral Science Foundation under Grant No.~BX20250067,
and the China Postdoctoral Science Foundation under Grant No.~2025M773074.}
\subjclass[2020]{Primary 11B30; Secondary 11B05, 37A44, 43A25}
\keywords{upper Banach density, product-difference sets, rational solenoid,  Fourier ratio bound}
\begin{document}

\begin{abstract}
We establish a uniform product-difference theorem for dense subsets of
$\mathbb Z^2$, which gives an affirmative answer to Problem~2 of Fish
 and, as consequences, to both parts of his Problem~1.
More precisely, we prove that for every $\delta>0$ there exists an integer
$K(\delta)\geq 1$ such that every set $E\subseteq\mathbb Z^2$ with upper
Banach density $d^\star(E)\geq\delta$ satisfies
\[
K(\delta)\mathbb Z
\subseteq
\{ab:(a,b)\in E-E\}.
\]
As consequences, we obtain affirmative answers to both parts of Fish's
Problem~1: for positive-density sets $E_1,E_2\subseteq\mathbb Z$ and
$E\subseteq\mathbb Z^2$, respectively, the sets
\[
(E_1-E_1)^2-(E_2-E_2)^2
\quad\text{and}\quad
\{x^2-y^2:(x,y)\in E-E\}
\]
contain nontrivial ideals of $\mathbb Z$, with generators depending only
on the corresponding density thresholds. In particular, the latter result also settles a conjecture of Davies concerning
differences of the indefinite quadratic form $x^2-y^2$ in dense
subsets of $\mathbb Z^2$.
\end{abstract}

\maketitle

\section{Introduction}

Estimating the quantitative and structural behavior of sumsets and
product sets lies at the heart of additive and arithmetic combinatorics.
For non-empty subsets $A,B$ of a commutative ring, we write
\[
   A+B=\{a+b:a\in A,\ b\in B\},\qquad
   A-B=\{a-b:a\in A,\ b\in B\},
\]
and
\[
   A\cdot B=\{ab:a\in A,\ b\in B\}.
\]
In expressions involving differences of squares, we use the pointwise
notation
\[
   A^2:=\{a^2:a\in A\},
\]
which should not be confused with the product set $A\cdot A$.

A fundamental theme in the subject is the tension between additive and
multiplicative structure: a finite set cannot be simultaneously too
structured with respect to both operations. This principle is embodied
in the celebrated Erd\H{o}s--Szemer\'edi sum--product conjecture
\cite{erdos}.

\begin{conjecture}[Erd\H{o}s--Szemer\'edi \cite{erdos}]
For every $\epsilon>0$ and every finite set $A\subseteq\mathbb Z$,
\[
   |A+A|+|A\cdot A|
   \gg_{\epsilon}|A|^{2-\epsilon}.
\]
\end{conjecture}

While the finite sum--product problem measures additive and
multiplicative growth through cardinality, in the infinite setting one
may instead ask what algebraic structure is forced by positive density.
In particular, a natural problem is to understand to what extent
additive largeness, as reflected by difference sets, necessarily
produces multiplicative structure. For general background on sumsets,
difference sets, and related structural questions, we refer to
Ruzsa \cite{ruzsa}.

Results of this type have been studied using methods from ergodic theory
and topological dynamics. Bj\"orklund and Fish \cite{fishold}, for
instance, established arithmetic patterns in difference sets by means
of ergodic-theoretic techniques. Of particular relevance to the present
paper is the following theorem of Fish
\cite[Corollary~1.1]{Fish2018}: if
$E_1,E_2\subseteq\mathbb Z$ have positive upper Banach density, then
there exists an integer $k\geq1$, bounded in terms of the two densities,
such that
\[
   k\mathbb Z\subseteq (E_1-E_1)(E_2-E_2).
\]
This product-difference phenomenon was subsequently extended by Liu
\cite{LiuPositiveDensity} to the setting of countable discrete amenable
groups.

Fish's theorem may be viewed geometrically as a statement about the
Cartesian product
\[
   E_1\times E_2\subseteq\mathbb Z^2.
\]
Indeed,
\[
   (E_1\times E_2)-(E_1\times E_2)
   =(E_1-E_1)\times(E_2-E_2),
\]
so the two coordinates of a difference vector arise independently from
two one-dimensional difference sets. For a general positive-density
set $E\subseteq\mathbb Z^2$, however, there is no such product structure:
the two coordinates of a vector in $E-E$ may be strongly correlated.
It is therefore a genuinely different problem to determine whether
positive density alone still forces the coordinate products of
differences to contain a nontrivial ideal of $\mathbb Z$.

To formulate this question precisely, we recall the standard notion of
upper Banach density in $\mathbb Z^2$.

\begin{definition}[Upper Banach density in $\mathbb Z^2$]
For $A\subseteq\mathbb Z^2$, define
\[
   d^\star(A)
   :=
   \lim_{N\to\infty}
   \sup_{z\in\mathbb Z^2}
   \frac{|A\cap(z+[-N,N]^2)|}{(2N+1)^2}.
\]
\end{definition}

For $E\subseteq\mathbb Z^2$, write
\[
   P(E):=\{xy:(x,y)\in E-E\}.
\]
Thus $P(E)$ records the coordinate products of difference vectors in
$E$. Motivated by the problem of removing the Cartesian-product
structure from the theorem above, Fish posed the following question.

\begin{problem}[{\cite[Problem~2]{Fish2018}}]\label{qn}
Let $E\subseteq\mathbb Z^2$ have positive upper Banach density.
Does there exist a constant $k_0$, depending only on $d^\star(E)$,
such that for some integer $1\leq k\leq k_0$ one has
\[
   k\mathbb Z\subseteq P(E)?
\]
\end{problem}

In the same paper, Fish posed two companion questions concerning
differences of squares.

\begin{problem}[{\cite[Problem~1]{Fish2018}}]\label{qn:squares}
\begin{enumerate}[label=\textup{(\roman*)}]
\item
Let $E_1,E_2\subseteq\mathbb Z$ have positive upper Banach density.
Is there a constant $k_0$, depending only on
$d^\star(E_1)$ and $d^\star(E_2)$, such that for some
$1\leq k\leq k_0$,
\[
   k\mathbb Z
   \subseteq
   (E_1-E_1)^2-(E_2-E_2)^2?
\]

\item
Let $E\subseteq\mathbb Z^2$ have positive upper Banach density.
Is there a constant $k_0$, depending only on $d^\star(E)$, such that
for some $1\leq k\leq k_0$,
\[
   k\mathbb Z
   \subseteq
   \{x^2-y^2:(x,y)\in E-E\}?
\]
\end{enumerate}
\end{problem}

A recent result of Goswami \cite{G} gives partial progress toward
Problem~\ref{qn} by ultrafilter methods. The main result of the present
paper gives a complete affirmative answer.

\begin{theorem}\label{thm:main}
For every $\delta>0$, there exists an integer $K(\delta)\geq1$ such
that, for every $E\subseteq\mathbb Z^2$ satisfying
$d^\star(E)\geq\delta$,
\[
   K(\delta)\mathbb Z
   \subseteq
   P(E)
   =
   \{xy:(x,y)\in E-E\}.
\]
\end{theorem}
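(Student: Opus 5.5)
The plan is to transfer the problem to ergodic theory and then bound a correlation sequence from below by a positive constant depending only on $\delta$, uniformly in the target $n$. By the Furstenberg correspondence principle for $\mathbb Z^2$, there is a measure-preserving $\mathbb Z^2$-system $(X,\mu,T)$ and a set $A$ with $\mu(A)\ge\delta$ such that $d^\star(E\cap(E-v))\ge\mu(A\cap T^{-v}A)$ for every $v\in\mathbb Z^2$. It therefore suffices to find an integer $K=K(\delta)$ with the following property: for every $n\in\mathbb Z$ there is a vector $v=(x,y)$ with $xy=Kn$ and $\mu(A\cap T^{-v}A)>0$. The case $n=0$ is trivial, since one may take $v=0$.

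For the candidate vectors I would use the hyperbola points $v_j(n)=(j,\,Kn/j)$ for $j\in J$, where $J=\{1,\dots,L\}$ and $K=\operatorname{lcm}(1,\dots,L)$. Here $L$, and hence $K$, is to be chosen in terms of $\delta$ only. I will show that the average
\[
\frac1L\sum_{j\le L}\mu(A\cap T^{-v_j(n)}A)
\]
is positive. Write $f=1_A$ and let $\sigma$ be its spectral measure on $\mathbb T^2$. The average then equals
\[
\int_{\mathbb T^2}\frac1L\sum_{j\le L}e\!\left(j\alpha+\tfrac{Kn}{j}\beta\right)\,d\sigma(\alpha,\beta).
\]

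I would split $\sigma$ according to the behaviour of the two coordinates.

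\emph{Rational part.} Take the part of $\sigma$ supported on points $(\alpha,\beta)$ with $\alpha\in\Q$ of small denominator and $\beta$ rational with denominator $q$ dividing $K/j$ for the relevant $j$. On this part the phase $Kn\beta/j$ is an integer, so the kernel reduces to the average of $e(j\alpha)$ alone and is independent of $n$. This is exactly where the rational solenoid, the dual of $\mathbb Q$, enters: the rational part of the Kronecker factor is a compact group rotation, and in it the return to $A$ is controlled by a van der Corput or Bogolyubov-type positivity. The projection onto the invariant and rational-periodic functions contributes at least $\mu(A)^2\ge\delta^2$, up to an error coming from those $j$ for which $\alpha$ is rational with small denominator but not dividing $j$.

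\emph{Irrational-in-$\alpha$ part.} On the remaining part, the average over $j$ of $e(j\alpha)$ twisted by $e(Kn\beta/j)$ should be small. I would prove a Fourier ratio bound showing that the $L^2(\sigma)$-mass of the kernel outside a neighbourhood of the rational points is $o_L(1)$ uniformly in $n$. The idea is to pass to blocks of $j$ on which $Kn/j$ varies slowly, or to apply van der Corput in $j$ to remove the $\beta$-dependence.

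\emph{Choice of $L$.} Finally, choose $L$ large so that the total error is below $\delta^2/2$. The number of rational frequencies carrying non-negligible mass is bounded via Bessel's inequality in terms of $\delta$, and $K$ is taken to be divisible by all of their denominators, which keeps $K$ dependent on $\delta$ only.

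\textbf{Main obstacle.} The hard step is the uniformity in $n$ of the bound for the non-rational part. The phase $Kn\beta/j$ depends on both $j$ and $n$, so for irrational $\beta$ and large $n$ the $\beta$-oscillation can conspire with the $\alpha$-oscillation. If the $j$-average approach fails here, I would instead enlarge the family to $(jt,\,Kn/(jt))$ with $t\mid K$, which gives a two-parameter average and more room for cancellation. Alternatively, I would exploit the symmetry $(x,y)\mapsto(y,x)$ and average over both coordinate roles.
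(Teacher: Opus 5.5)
There is a genuine gap, and it comes before the step you flag. Uniformity in $n$ is in fact free. Your kernel for parameter $n$ is $k_n(\alpha,\beta)=k_1(\alpha,n\beta)$, so any bound $\Ree k_1\geq-\eta$ valid on all of $\mathbb T^2$ holds for every $n$. A Hoffman-type ratio bound then gives $d^\star(E)\leq\eta/(1+\eta)$ for sets whose differences avoid the points. This is exactly how the paper obtains all multiples, via the stretching $L_n(a,b)=(na,b)$.

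The actual obstruction is that averaging over the additive interval $j\in\{1,\dots,L\}$ never yields a nearly nonnegative kernel. Your claim that the kernel is small off the rational atoms is also false. On $\beta=0$ the kernel is the normalized Dirichlet kernel $\frac1L\sum_{j\leq L}\cos(2\pi j\alpha)$, which is about $-0.22$ near $\alpha\approx0.7/L$ for every large $L$. At $\alpha=0$, $\beta\approx cL/(Kn)$ it is approximately $\int_0^1\exp(2\pi i c/t)\,dt$, which does not tend to $0$. Since $L$ is fixed by $\delta$ first, the system may be chosen afterwards with its spectral measure sitting in such regions.

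For example, let $\mathbb Z^2$ act on $\mathbb T$ by $(a,b)\cdot x=x+b\beta_0$, with $\beta_0$ irrational, $Kn\beta_0\approx2\delta L$, and $A=[-\delta/2,\delta/2]$. Then $\mu(A)=\delta$, but up to negligible error $\mu(A\cap T^{-v_j(n)}A)=\max(0,\delta-\|2\delta L/j\|)$, where $\|\cdot\|$ is distance to the nearest integer. Its average over $j\leq L$ is about $(\pi^2/3)\delta^3$, far below the $\delta^2/2$ your scheme asserts when $\delta$ is small.

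Separately, $K$ cannot be made divisible by the denominators of all heavy rational frequencies while depending only on $\delta$. The set $\{(a,b):a\bmod q\in[0,\delta q)\}$ has spectral mass $\asymp\delta^2$ at $(1/q,0)$ for every $q$.

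The missing idea is multiplicative averaging combined with measure rigidity. An interval is not F\o{}lner in $\mathbb Q^\times$ (e.g.\ $|2\{1,\dots,L\}\cap\{1,\dots,L\}|\approx L/2$), so nothing forces limits of your averages to have any invariance. The paper instead uses all points $(d,K_j/d)$ with $d\mid K_j=P_j^{2j}$, together with their negatives. After dividing by $A_j=P_j^j$, the parameters $d/A_j$ run over the F\o{}lner box $F_j^+\subseteq\mathbb Q^\times$. Lifting frequencies to $X^2$ with $X=\Hom(\mathbb Q,\mathbb T)$, weak-* limits of the orbit averages are invariant under the reciprocal scaling $D_s(x,y)=(M_sx,M_{1/s}y)$. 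The Einsiedler--Lindenstrauss disjointness of $M_{2^a3^b}$ and $M_{2^{-a}3^{-b}}$ then forces these limits to be convex combinations of products of $\delta_0$ and $m_X$, on which $\Ree(e_1(x)e_1(y))$ has nonnegative integral. This gives $\min\widehat{\nu_j}=-\eta_j\to0$ uniformly over all frequencies, which is the input the Hoffman bound needs. A rational/irrational case split of one system's spectral measure cannot substitute for this, because the bound must hold uniformly over all sets of density at least $\delta$.
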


In particular, taking $\delta=d^\star(E)$ gives an affirmative answer
to Problem~\ref{qn}. The formulation above is useful because the same
integer $K(\delta)$ works simultaneously for all sets whose upper
Banach density is at least $\delta$.

Theorem~\ref{thm:main} also answers both parts of
Problem~\ref{qn:squares}. The connection is provided by the elementary
factorization
\[
   x^2-y^2=(x-y)(x+y)
\]
and the linear map
\[
   T(x,y)=(x-y,x+y).
\]

\begin{corollary}\label{cor1}
Let $E_1,E_2\subseteq\mathbb Z$ satisfy
\[
   d^\star(E_1)=\alpha>0,
   \qquad
   d^\star(E_2)=\beta>0.
\]
Then there exists an integer $K=K(\alpha,\beta)\geq1$, depending only
on $\alpha$ and $\beta$, such that
\[
   K\mathbb Z
   \subseteq
   (E_1-E_1)^2-(E_2-E_2)^2.
\]
\end{corollary}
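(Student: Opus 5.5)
The plan is to deduce the statement from Theorem~\ref{thm:main}. We apply it to the image of the product set $E_1\times E_2$ under the linear map $T(x,y)=(x-y,x+y)$ from the introduction, using the factorization $x^2-y^2=(x-y)(x+y)$.

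\emph{Step 1: the product set is dense.} Put $E:=E_1\times E_2\subseteq\Z^2$. I would show $d^\star(E)\geq\alpha\beta$. The one-dimensional quantity $s_N(E_1):=\sup_{z\in\Z}|E_1\cap(z+[-N,N])|/(2N+1)$ is subadditive in the appropriate sense. Hence its limit $\alpha$ is also its infimum, so $s_N(E_1)\geq\alpha$ for every $N$, and likewise $s_N(E_2)\geq\beta$. For each $N$, choose intervals $z_1+[-N,N]$ and $z_2+[-N,N]$ that nearly attain these suprema. The square $(z_1,z_2)+[-N,N]^2$ then meets $E$ in at least about $\alpha\beta(2N+1)^2$ points. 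Letting $N\to\infty$ gives $d^\star(E)\geq\alpha\beta$. If one prefers not to rely on the infimum property, it suffices to pick $N$ along a common subsequence where both suprema are close to their limits.

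\emph{Step 2: transport by $T$.} Let $F:=T(E)$. Since $T$ is linear and injective, $F-F=T(E-E)=T\bigl((E_1-E_1)\times(E_2-E_2)\bigr)$. Therefore
\[
P(F)=\{(a-b)(a+b): a\in E_1-E_1,\ b\in E_2-E_2\}=(E_1-E_1)^2-(E_2-E_2)^2 .
\]
For the density of $F$, note that $T\bigl(z+[-N,N]^2\bigr)\subseteq T(z)+[-2N,2N]^2$. This box has $(4N+1)^2\leq 4(2N+1)^2$ points, and $T$ is injective, so
\[
\frac{|F\cap(T(z)+[-2N,2N]^2)|}{(4N+1)^2}\geq\frac{|E\cap(z+[-N,N]^2)|}{4(2N+1)^2}.
\]
Taking suprema and limits yields $d^\star(F)\geq\alpha\beta/4$.

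\emph{Step 3: conclude.} Apply Theorem~\ref{thm:main} with $\delta=\alpha\beta/4$. Since $d^\star(F)\geq\delta$, we get $K(\alpha\beta/4)\Z\subseteq P(F)=(E_1-E_1)^2-(E_2-E_2)^2$. So $K(\alpha,\beta):=K(\alpha\beta/4)$ works and depends only on $\alpha,\beta$. The uniformity of $K(\delta)$ over all sets of density at least $\delta$ is exactly what makes this choice legitimate.

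The only mildly delicate point is Step~1, namely getting a single scale $N$ at which both one-dimensional densities are nearly attained. The subadditivity/infimum property of the sliding-window supremum handles this. Everything else is bookkeeping with the index-$2$ lattice $T(\Z^2)$.
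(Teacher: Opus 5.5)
Your proof is correct and follows the paper's route: the paper also takes $E=E_1\times E_2$, transports it by $T(x,y)=(x-y,x+y)$ (through Corollary~\ref{cor2}), and applies Theorem~\ref{thm:main}. The only difference is the density bound. The paper uses the exact formulas $d^\star(E_1\times E_2)=\alpha\beta$ and $d^\star(TE)=d^\star(E)/|\det T|$ from Lemma~\ref{lem:density-facts} to get threshold $\alpha\beta/2$ and modulus $K(\alpha\beta/2)$, whereas your elementary box-containment estimate loses a factor of $2$ and gives $K(\alpha\beta/4)$, which is equally valid because $K$ only needs to depend on $\alpha$ and $\beta$.
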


\begin{corollary}\label{cor2}
For every $\delta>0$, there exists an integer $K'(\delta)\geq1$ such
that every $E\subseteq\mathbb Z^2$ with
$d^\star(E)\geq\delta$ satisfies
\[
   K'(\delta)\mathbb Z
   \subseteq
   \{x^2-y^2:(x,y)\in E-E\}.
\]
\end{corollary}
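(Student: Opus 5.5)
The statement to prove is Corollary~\ref{cor2}. The plan is to deduce it from Theorem~\ref{thm:main} by pulling $E$ back through the linear map $T(x,y)=(x-y,x+y)$. Since $T$ is injective with index-$2$ image, direct images lose density, so I will instead push $E$ forward through the adjoint-type map $S(u,v)=(u+v,\,v-u)$, or work with a suitable preimage.

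The key identity is that $(u+v)^2-(v-u)^2=4uv$. Set
\[
F:=\{(u,v)\in\mathbb Z^2:(u+v,\;v-u)\in E\}=S^{-1}(E).
\]
If $(u,v)=(u_1,v_1)-(u_2,v_2)$ with both points in $F$, then $S(u,v)=S(u_1,v_1)-S(u_2,v_2)$ lies in $E-E$. Writing $(x,y)=S(u,v)$, we get $x^2-y^2=4uv$. Hence
\[
4\,P(F)\subseteq\{x^2-y^2:(x,y)\in E-E\}.
\]
So it suffices to show that $d^\star(F)\geq c\,\delta$ for an absolute constant $c>0$. Then Theorem~\ref{thm:main} gives $K(c\delta)\mathbb Z\subseteq P(F)$, and we may take $K'(\delta)=4K(c\delta)$.

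Now $S$ is a bijection from $\mathbb Z^2$ onto the index-$2$ sublattice $L=\{(x,y):x\equiv y \pmod 2\}$, so $F$ corresponds to $E\cap L$. The obstacle is that $E$ might avoid $L$ entirely. To handle this, I will choose a coset representative $w\in\{(0,0),(1,0)\}$ with $d^\star((E-w)\cap L)\geq\delta/2$. This is possible because the two cosets partition $\mathbb Z^2$, and upper Banach density is subadditive. I then replace $E$ by $E-w$, which changes neither $E-E$ nor the density.

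It remains to compare densities under $S$. The image under $S^{-1}$ of a large square box meeting $L$ is a rotated square of comparable area, scaled by the factor $1/2$ from the determinant. Using the F{\o}lner property of squares versus rotated squares, which is standard because upper Banach density in $\mathbb Z^2$ can be computed along any F{\o}lner sequence of convex bodies, I expect to get $d^\star(F)\geq 2\cdot(\delta/2)\cdot c'$. This bound is $\gtrsim\delta$, with the exact constant irrelevant. This density-transfer step is the only nonroutine point. I would carry it out by covering each rotated square by axis-parallel squares, or by invoking the F{\o}lner-sequence independence of $d^\star$.
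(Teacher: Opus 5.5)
Your argument is correct, but it takes a detour that the paper avoids. The paper pushes $E$ forward. With $T(x,y)=(x-y,x+y)$ and $E'=T(E)$, the determinant formula in Lemma~\ref{lem:density-facts} gives $d^\star(E')=d^\star(E)/2\geq\delta/2$. Since $E'-E'=T(E-E)$, one gets $P(E')=\{x^2-y^2:(x,y)\in E-E\}$ exactly, so $K'(\delta)=K(\delta/2)$.

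The concern that motivated your detour, that direct images lose density, is harmless here. The loss is only the factor $1/|\det T|=1/2$, and Theorem~\ref{thm:main} accepts any positive threshold.

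Your construction is in fact the paper's followed by a rescaling. Since $S=2T^{-1}$, your set is $F=\tfrac12\,T\bigl((E-w)\cap L\bigr)$: you restrict to the parity class that $T$ maps into $2\mathbb Z^2$, then divide by $2$. This is why you need the coset selection, and why $P(F)$ acquires the factor $\tfrac14$.

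The step you flag as the only nonroutine one is precisely the last assertion of Lemma~\ref{lem:density-facts} applied to $S$. It gives $d^\star(F)=2\,d^\star((E-w)\cap L)\geq\delta$, so $c=1$ and no covering by rotated squares is needed.

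As for what each route buys: the paper's is a one-line reduction that identifies the target set with a single $P(\cdot)$. Yours keeps the density threshold at $\delta$ at the price of a factor $4$ in the modulus, giving $K'(\delta)=4K(\delta)$ rather than $K(\delta/2)$. Since the dependence of $K$ on $\delta$ is not quantified, neither bound is a priori better.
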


\begin{remark}
    Corollary~\ref{cor2} also settles the density conjecture of Davies
\cite[Conjecture~6]{Davies2025}.
Indeed, write
\[
   \overline d(E)
   :=
   \limsup_{N\to\infty}
   \frac{|E\cap[-N,N]^2|}{(2N+1)^2}
\]
for the ordinary upper density of $E\subseteq\mathbb Z^2$.
Given $\epsilon>0$, put $d=K'(\epsilon)$. If
$\overline d(E)>\epsilon$, then
$d^\star(E)\geq\overline d(E)>\epsilon$, and
Corollary~\ref{cor2} gives
\[
   d^2\in d\mathbb Z
   \subseteq
   \{a^2-b^2:(a,b)\in E-E\}.
\]
Consequently, there exist $(x,y),(x',y')\in E$ such that
\[
   (x-x')^2-(y-y')^2=d^2,
\]
where $d$ depends only on $\epsilon$.
\end{remark}

For $N\geq1$, write $\mathbb Z_N=\mathbb Z/N\mathbb Z$.
The uniformity in Theorem~\ref{thm:main} yields finite cyclic
analogues by periodic lifting.

In the same-set case, Shkredov
\cite[Theorem~3.2]{Shkredov2024} obtained a related ideal containment
for squarefree moduli, with a bound that also depends on the number
of distinct prime divisors of the modulus. The following corollary
applies to arbitrary moduli and gives a bound depending only on the
density threshold.
\begin{corollary}\label{corfirst:abelian}
For every $\delta>0$, there exists an integer $K(\delta)\geq1$ such
that, for every $N\geq1$ and every $E\subseteq\mathbb Z_N^2$ satisfying
$
   |E|\geq\delta N^2,
$
there exists a divisor $d\mid N$ with $d\leq K(\delta)$ such that
\[
   d\mathbb Z_N
   \subseteq
   \{uv:(u,v)\in E-E\}.
\]
\end{corollary}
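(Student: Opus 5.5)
The plan is to deduce Corollary~\ref{corfirst:abelian} from Theorem~\ref{thm:main} by periodic lifting. I take the same constant $K(\delta)$ as in Theorem~\ref{thm:main}; if a smaller density threshold is needed, I use $K(\delta)$ for a slightly smaller value.

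Let $\pi:\mathbb Z^2\to\mathbb Z_N^2$ be the reduction map and put $\tilde E:=\pi^{-1}(E)\subseteq\mathbb Z^2$. This set is $N\mathbb Z^2$-periodic. Any box $z+[-M,M]^2$ contains about $(2M+1)^2/N^2$ complete fundamental domains, and the boundary contributes only $O(MN)$ points. Hence
\[
d^\star(\tilde E)=\frac{|E|}{N^2}\geq\delta .
\]
The limit exists by periodicity, so I only need this routine count.

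Theorem~\ref{thm:main} then gives $K\mathbb Z\subseteq P(\tilde E)$ with $K=K(\delta)$. Now take any $t\in K\mathbb Z_N$, lift it to an integer $m\in K\mathbb Z$, and write $m=xy$ with $(x,y)=\tilde e-\tilde e'$ for some $\tilde e,\tilde e'\in\tilde E$. Reducing mod $N$ gives $(u,v)=\pi(\tilde e)-\pi(\tilde e')\in E-E$, and $uv\equiv m\equiv t$. Therefore
\[
K\mathbb Z_N\subseteq\{uv:(u,v)\in E-E\}.
\]

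It remains to produce a divisor of $N$. Set $d:=\gcd(K,N)$, so $d\mid N$ and $d\leq K$. In $\mathbb Z_N$ the subgroup generated by $K$ equals the subgroup generated by $\gcd(K,N)$, because $K\mathbb Z+N\mathbb Z=d\mathbb Z$. Hence $d\mathbb Z_N=K\mathbb Z_N$, which is contained in the product set.

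I do not expect a real obstacle here. The only point needing care is the density computation for the periodic lift, where the boundary error $O(MN)/(2M+1)^2$ tends to $0$ as $M\to\infty$ with $N$ fixed. The uniformity of $K(\delta)$ in Theorem~\ref{thm:main} is exactly what makes the bound independent of $N$.
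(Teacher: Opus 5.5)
Your proof is correct and follows essentially the same route as the paper: lift $E$ to the $N\mathbb Z^2$-periodic set $\pi_N^{-1}(E)$, which has density $|E|/N^2\geq\delta$; apply Theorem~\ref{thm:main}; reduce modulo $N$; and take $d=\gcd(K(\delta),N)$. Your explicit lift-and-reduce argument for each $t\in K\mathbb Z_N$ and your boundary count for the density of the periodic lift are just more detailed versions of the paper's own steps.
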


The same argument, together with Corollaries~\ref{cor1}
and~\ref{cor2}, gives the corresponding statements for differences
of squares.

\begin{corollary}\label{corsecond:abelian}
Let $\alpha,\beta>0$. There exists an integer
$K(\alpha,\beta)\geq1$ such that, for every $N\geq1$ and
$E_1,E_2\subseteq\mathbb Z_N$ satisfying
$
   |E_1|\geq\alpha N,
$ and  $
   |E_2|\geq\beta N,$
there exists a divisor $d\mid N$ with $d\leq K(\alpha,\beta)$ such that
\[
   d\mathbb Z_N
   \subseteq
   (E_1-E_1)^2-(E_2-E_2)^2.
\]
\end{corollary}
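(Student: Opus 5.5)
We deduce the statement from Corollary~\ref{cor1} by periodic lifting and reduction modulo $N$. Let $\pi:\mathbb Z\to\mathbb Z_N$ be the quotient map, and put $\widetilde E_i:=\pi^{-1}(E_i)\subseteq\mathbb Z$ for $i=1,2$. Each $\widetilde E_i$ is $N$-periodic, with $|E_i|$ elements in every block of $N$ consecutive integers. Hence every interval of length $L$ meets $\widetilde E_i$ in $\gamma_iL+O(N)$ points, where $\gamma_1=|E_1|/N\geq\alpha$ and $\gamma_2=|E_2|/N\geq\beta$. In particular $d^\star(\widetilde E_i)=\gamma_i$.

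The first step handles a mismatch of hypotheses. Corollary~\ref{cor1} is stated for sets of upper Banach density \emph{exactly} $\alpha$ and $\beta$, whereas we only know $\gamma_1\ge\alpha$ and $\gamma_2\ge\beta$. We therefore pass to subsets $F_1\subseteq\widetilde E_1$ and $F_2\subseteq\widetilde E_2$ with $d^\star(F_1)=\alpha$ and $d^\star(F_2)=\beta$. This is harmless, because $(F_1-F_1)^2-(F_2-F_2)^2\subseteq(\widetilde E_1-\widetilde E_1)^2-(\widetilde E_2-\widetilde E_2)^2$.

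To build $F_1$, enumerate $\widetilde E_1$ increasingly as $(e_n)_{n\in\mathbb Z}$. Keep $e_n$ exactly when $\lfloor (n+1)\alpha/\gamma_1\rfloor>\lfloor n\alpha/\gamma_1\rfloor$. Among any $m$ consecutive elements of $\widetilde E_1$, this Beatty-type rule retains $m\alpha/\gamma_1+O(1)$ of them. An interval of length $L$ contains $m=\gamma_1L+O(N)$ consecutive elements of $\widetilde E_1$. So it contains $\alpha L+O_{N}(1)$ elements of $F_1$, uniformly in the position of the interval. Hence $d^\star(F_1)=\alpha$, and $F_2$ is built in the same way with $\beta/\gamma_2$. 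I expect this reduction to exact density to be the only point needing care. If Corollary~\ref{cor1} is read as holding for all densities at least $\alpha$ and $\beta$ (as the uniform Theorem~\ref{thm:main} suggests), this step can be skipped.

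Now let $K=K(\alpha,\beta)$ be given by Corollary~\ref{cor1}. Then
\[
K\mathbb Z\subseteq (F_1-F_1)^2-(F_2-F_2)^2\subseteq(\widetilde E_1-\widetilde E_1)^2-(\widetilde E_2-\widetilde E_2)^2 .
\]
Apply $\pi$. Since $\pi$ is a ring homomorphism and $\pi(\widetilde E_i)=E_i$, every element $(a-a')^2-(b-b')^2$ with $a,a'\in\widetilde E_1$ and $b,b'\in\widetilde E_2$ maps to an element of $(E_1-E_1)^2-(E_2-E_2)^2\subseteq\mathbb Z_N$. Hence
\[
\pi(K\mathbb Z)\subseteq (E_1-E_1)^2-(E_2-E_2)^2 .
\]
Finally, $\pi(K\mathbb Z)=d\,\mathbb Z_N$ with $d=\gcd(K,N)$, because $K\mathbb Z+N\mathbb Z=\gcd(K,N)\mathbb Z$. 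This $d$ divides $N$ and satisfies $d\le K(\alpha,\beta)$, and $K$ depends only on $\alpha$ and $\beta$, not on $N$, which is the claim.
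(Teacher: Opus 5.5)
Your proof is correct and follows the paper's route: lift $E_1,E_2$ periodically to $\mathbb Z$, apply the integer square-difference result, and reduce modulo $N$ with $d=\gcd(K,N)$. The only difference is that the paper applies Corollary~\ref{cor2} directly to $\widetilde E_1\times\widetilde E_2$ with threshold $\alpha\beta$, obtaining $K(\alpha\beta/2)$; because that corollary is stated for all densities at least a threshold, the paper needs no Beatty-type thinning to exact densities, though your thinning step is valid.
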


\begin{corollary}\label{corthird:abelian}
For every $\delta>0$, there exists an integer $K'(\delta)\geq1$ such
that, for every $N\geq1$ and every
$E\subseteq\mathbb Z_N^2$ satisfying
$
   |E|\geq\delta N^2,
$
there exists a divisor $d\mid N$ with $d\leq K'(\delta)$ such that
\[
   d\mathbb Z_N
   \subseteq
   \{u^2-v^2:(u,v)\in E-E\}.
\]
\end{corollary}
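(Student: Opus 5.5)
The plan is to deduce Corollary~\ref{corthird:abelian} from Corollary~\ref{cor2} by periodic lifting, in the same way as Corollary~\ref{corfirst:abelian} is obtained from Theorem~\ref{thm:main}. We take $K'(\delta)$ to be the constant of Corollary~\ref{cor2}. Given $N\ge1$ and $E\subseteq\mathbb Z_N^2$ with $|E|\ge\delta N^2$, let $\pi:\mathbb Z^2\to\mathbb Z_N^2$ be reduction modulo $N$ and set $\widetilde E=\pi^{-1}(E)$. This set is $N\mathbb Z^2$-periodic. Any box $z+[-M,M]^2$ contains $\lfloor (2M+1)/N\rfloor^2$ disjoint complete fundamental domains, each meeting $\widetilde E$ in exactly $|E|$ points. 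Hence
\[
\frac{|\widetilde E\cap(z+[-M,M]^2)|}{(2M+1)^2}\;\ge\;\frac{\lfloor (2M+1)/N\rfloor^2 N^2}{(2M+1)^2}\cdot\frac{|E|}{N^2}\;\longrightarrow\;\frac{|E|}{N^2}\ge\delta
\]
as $M\to\infty$. So $d^\star(\widetilde E)\ge\delta$, and Corollary~\ref{cor2} gives $K'(\delta)\mathbb Z\subseteq\{x^2-y^2:(x,y)\in\widetilde E-\widetilde E\}$.

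Next we project back to $\mathbb Z_N$. Let $k=K'(\delta)$ and put $d=\gcd(k,N)$, so that $d\mid N$ and $d\le k$. In $\mathbb Z_N$ the subgroup generated by the residue of $k$ is exactly $d\mathbb Z_N$, and every element of it has the form $\bar{km}$ for some $m\in\mathbb Z$. For such an element, pick $(x,y)=(a,b)-(a',b')$ with $(a,b),(a',b')\in\widetilde E$ and $x^2-y^2=km$. Reducing modulo $N$ gives $\pi(a,b),\pi(a',b')\in E$, with $u=\bar x$ and $v=\bar y$ satisfying $(u,v)\in E-E$. Since reduction is a ring homomorphism, $u^2-v^2=\bar{km}$. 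Hence $d\mathbb Z_N\subseteq\{u^2-v^2:(u,v)\in E-E\}$.

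The only step needing any care is the density computation for the lifted set. It follows from exact periodicity, since the error from partial fundamental domains at the boundary is $O(N/M)$ and vanishes as $M\to\infty$. The remaining point to check is the identification $\langle \bar k\rangle=d\mathbb Z_N$ with $d=\gcd(k,N)$, which is elementary via B\'ezout. No uniformity issue arises, because $K'(\delta)$ from Corollary~\ref{cor2} is independent of the set and therefore of $N$.
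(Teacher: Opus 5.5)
Your proposal is correct and follows the paper's argument. You lift $E$ to the $N\mathbb Z^2$-periodic set $\pi_N^{-1}(E)$, apply Corollary~\ref{cor2} with threshold $\delta$, and reduce modulo $N$ with $d=\gcd(K'(\delta),N)$; your explicit density count for the lift and the B\'ezout identification of the image of $K'(\delta)\mathbb Z$ just fill in details the paper states without proof.
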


 The following rational analogues already follow from the
countable-field results of Bj\"orklund and Fish \cite{arxiv}.
We include their derivation from Theorem~\ref{thm:main} to explain
the finite-window transfer from the integer setting and the role
of rational scaling.

We use upper Banach density on the discrete amenable group
$(\mathbb Q^d,+)$. Thus, for $E\subseteq\mathbb Q^d$, we set
\[
   d^\star(E)
   :=
   \sup_{(F_n)}
   \limsup_{n\to\infty}
   \frac{|E\cap F_n|}{|F_n|},
\]
where the supremum is taken over all F\o{}lner sequences
$(F_n)$ in $\mathbb Q^d$.

\begin{corollary}\label{rational1}
If $E\subseteq\mathbb Q^2$ has positive upper Banach density, then
\[
   P(E)=\{xy:(x,y)\in E-E\}=\mathbb Q.
\]
\end{corollary}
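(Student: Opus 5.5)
Given $q\in\mathbb Q$, the plan is to pull a positive-density piece of $E$ back to $\mathbb Z^2$ by a rational scaling and then apply Theorem~\ref{thm:main} there. Put $\delta=d^\star(E)/2>0$. Since $0\in P(E)$ trivially, fix $q\ne0$. The goal is an integer $M\ge1$ and a translate $w\in\mathbb Q^2$ such that
\[
E_M:=\{z\in\mathbb Z^2:\ w+z/M\in E\}
\]
has $d^\star(E_M)\ge\delta$. Differences in $E_M$ then pull back to differences in $E$: if $(a,b)\in E_M-E_M$, then $(a/M,b/M)\in E-E$, and so $ab/M^2\in P(E)$.

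\emph{Finite-window transfer.} Take a F\o lner sequence $(F_n)$ in $\mathbb Q^2$ with $|E\cap F_n|/|F_n|>2\delta-\epsilon$ along a subsequence. The natural tiles are
\[
(1/M)\mathbb Z^2\cap\bigl(w+[-N/M,N/M]^2\bigr).
\]
The standard argument is that upper Banach density on a countable amenable group is attained, up to $\epsilon$, by such lattice boxes. Concretely, for a fixed $N$ we pick $M$ divisible enough that the finite set $F_n$ (for large $n$) lies in $(1/M)\mathbb Z^2$. The F\o lner property of $F_n$ with respect to the finite box
\[
B=\{0,\dots,N-1\}^2/M
\]
gives, by the usual averaging $\frac1{|F_n|}\sum_{f\in F_n}|E\cap(f+B)|/|B|\approx |E\cap F_n|/|F_n|$, a translate $f+B$ in which $E$ has relative density $\ge 2\delta-2\epsilon$. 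However, $M$ depends on $N$, and we need density $\ge\delta$ in boxes of every size within a single lattice $(1/M)\mathbb Z^2$. I will handle this by running the averaging inside the lattice $\Lambda=(1/M_n)\mathbb Z^2\supseteq F_n$: the set $F_n$ is F\o lner in $\Lambda\cong\mathbb Z^2$ relative to all boxes of side $\le N_n$, with $N_n\to\infty$. Averaging over boxes of side $N$ for each $N\le N_n$ simultaneously is the delicate point; I need one rescaled copy with good density at all scales.

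\emph{Avoiding the multi-scale issue.} Since Theorem~\ref{thm:main} is uniform in $\delta$, I will apply a finite version instead, obtained by compactness from Theorem~\ref{thm:main}: there is $N_0(\delta,k)$ such that any $A\subseteq[-N_0,N_0]^2$ with $|A|\ge\delta(2N_0+1)^2$ satisfies
\[
\{1,\dots,k\}\cdot K(\delta)\subseteq P(A).
\]
The proof is by contradiction: if finite counterexamples existed for every $N_0$, glue or take a limit of them to build $E'\subseteq\mathbb Z^2$ with $d^\star(E')\ge\delta$ and $jK(\delta)\notin P(E')$ for some fixed $j\le k$. Then one box suffices. With $N=N_0(\delta,k)$, choose $M$ and the box $f+B$ as above; $A=M(E\cap(f+B))-Mf$ has density $\ge\delta$ in an $N$-box.

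\emph{Hitting $q$.} Write $q=r/s$ in lowest terms. We need $ab=qM^2$ with $ab\in K(\delta)\mathbb Z$ and $|ab/K(\delta)|\le k$. First replace $M$ by $M'=sK(\delta)M$; this is harmless, since the containment $F_n\subseteq(1/M)\mathbb Z^2$ persists and we simply redo the box selection at the finer lattice. Then
\[
qM'^2=r\,s\,K(\delta)^2M^2\in K(\delta)\mathbb Z.
\]
But this is unbounded, because $M$ grows with $N$, while $k$ must be chosen before $N$. This circularity is the main obstacle. The fix is to pick the box first: choose $M$ from $F_n$ with the box $B$ of side $N$. Inside the box the lattice points are $M$-scaled, so the coordinates of $A$ lie in $[-N,N]$ regardless of $M$. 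Consequently differences in $A$ satisfy $|ab|\le4N^2$, and to reach $ab=qM^2$ we would need $M\lesssim N$.

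So instead I take $B$ to be a box of side $N$ in the coarser lattice $(1/L)\mathbb Z^2$, with $L=sK(\delta)$ fixed and independent of $N$. Then $A\subseteq\mathbb Z^2$ after multiplying by $L$, and
\[
qL^2=rsK(\delta)^2\in K(\delta)\mathbb Z
\]
is bounded. We take $k=|rs|K(\delta)$ and $N=N_0(\delta,k)$. What remains is to find a dense box in the fixed lattice coset $w+(1/L)\mathbb Z^2$. This follows by pigeonhole on the finitely many cosets $(1/M)\mathbb Z^2/(1/L)\mathbb Z^2$ after averaging: some coset carries relative density $\ge2\delta-\epsilon$ of $E\cap F_n$. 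I will then run the F\o lner averaging within that coset, viewing it inside the subgroup $(1/L)\mathbb Z^2$ and using that $F_n$ restricted to cosets remains asymptotically invariant under $B$. Checking this coset F\o lner step carefully is where I expect the real work to lie.
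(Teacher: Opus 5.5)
You have the right ingredients for a valid proof, but the step you leave open is the one your sketch gets wrong. Choosing a coset of $(1/L)\mathbb Z^2$ by pigeonholing density and then using F\o{}lner invariance inside that coset fails. The F\o{}lner property of $F_n$ only makes the total defect $\sum_c|(F_n\cap c)\triangle((F_n\cap c)+b)|=|F_n\triangle(F_n+b)|$ small. So the coset with the best density can meet $F_n$ in a set that is almost all boundary; in the extreme, $F_n\cap c$ is a single point of $E$.

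No coset selection is needed, and no lattice $(1/M)\mathbb Z^2\supseteq F_n$ either. Apply the finite-window assertion of Lemma~\ref{lem:density-facts} to the finite set $F=L^{-1}\{0,\dots,N-1\}^2\subseteq\mathbb Q^2$. Some translate satisfies $|E\cap(w+F)|\ge d^\star(E)N^2=2\delta N^2$, and $w+F$ automatically lies in one coset of $(1/L)\mathbb Z^2$. Directly, averaging over all $f\in F_n$ gives $\sum_{f\in F_n}|E\cap(f+F)|=\sum_{b\in F}|E\cap(F_n+b)|\ge|F|\bigl(|E\cap F_n|-\max_{b\in F}|F_n\triangle(F_n+b)|\bigr)$. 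Then $A=L\bigl((E-w)\cap F\bigr)\subseteq\{0,\dots,N-1\}^2$ is the set you want.

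Two smaller repairs are also needed:
\begin{enumerate}
\item Your finite version must give $\{-k,\dots,k\}\cdot K(\delta)$, not just positive multiples. For $q<0$ you need $ab=rsK(\delta)^2<0$, and $P(A)$ is not closed under negation.
\item Of ``glue or take a limit'', gluing is the clean option. Fix one bad $j$ occurring for infinitely many $N_0$. Place the counterexamples so far apart that cross-differences have sup-norm above $|j|K(\delta)$. Since $d^\star$ is a limit over side lengths, this gives $d^\star(E')\ge\delta$.
\end{enumerate}

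With these repairs your argument is correct, and it differs from the paper's. The paper first shows (Lemma~\ref{lem:rational-transfer}) that the single value $K=K_{\mathbb Z}(\delta/4)$ lies in $P(E)$. It uses integer windows $z_L+Q_L$ and periodization with padding, absorbing the density loss by passing to $\delta/4$ rather than proving a finite version. It then reaches every $q\neq0$ through the anisotropic automorphism $(x,y)\mapsto((K/q)x,y)$ of $\mathbb Q^2$, which preserves $d^\star$ and satisfies $P(E')=(K/q)P(E)$.

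Your isotropic rescaling by $1/L$, with $L=sK(\delta)$, keeps both coordinates on one lattice. But it makes the target $qL^2=rsK(\delta)^2$ depend on $q$, which is why you need a bounded range of multiples and hence the compactness step. The paper's route is shorter because rescaling a single coordinate is free in $\mathbb Q^2$. In your route the finite version can also be dropped. Take $L=sK(2\delta)$, glue the dense boxes $A_N$ for all $N$ into one integer set avoiding $H_{qL^2}$, and apply Theorem~\ref{thm:main} directly with threshold $2\delta$.
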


\begin{corollary}\label{rational2}
If $E_1,E_2\subseteq\mathbb Q$ have positive upper Banach density, then
\[
   (E_1-E_1)^2-(E_2-E_2)^2=\mathbb Q.
\]
\end{corollary}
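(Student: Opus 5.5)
The plan is to derive Corollary~\ref{rational2} from Corollary~\ref{cor1} by a finite-window transfer combined with rational scaling. Fix $q\in\mathbb Q$ and write it as $q=r/s^2$ with $r\in\mathbb Z$ and $s\geq 1$. By upper Banach density on $\mathbb Q$, choose a F\o{}lner sequence and positive densities $\alpha,\beta$ (say half of $d^\star(E_1)$ and $d^\star(E_2)$) so that $E_1,E_2$ occupy proportions at least $\alpha,\beta$ of arbitrarily large F\o{}lner sets. The aim is to produce sets $A_1,A_2\subseteq\mathbb Z$ with $d^\star(A_1)\geq\alpha'$ and $d^\star(A_2)\geq\beta'$, where $\alpha',\beta'$ depend only on $\alpha,\beta$, and an integer $M\geq1$ such that
\[
\tfrac1M A_i+c_i\subseteq E_i\qquad(i=1,2)
\]
for suitable shifts $c_i\in\mathbb Q$. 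Then
\[
(E_1-E_1)^2-(E_2-E_2)^2\supseteq \tfrac{1}{M^2}\bigl((A_1-A_1)^2-(A_2-A_2)^2\bigr)\supseteq \tfrac{1}{M^2}K\mathbb Z,
\]
where $K=K(\alpha',\beta')$ comes from Corollary~\ref{cor1}. Since $M$ can be chosen freely (after replacing it by a multiple), the final step is to arrange $M$ so that $q\in\frac{K}{M^2}\mathbb Z$. For instance, taking $M$ divisible by $Ks$ gives $qM^2=rM^2/s^2\in K\mathbb Z$.

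The key step, and the main obstacle, is the transfer from $\mathbb Q$ to $\mathbb Z$ while retaining the freedom to enlarge $M$. I would argue via Furstenberg correspondence and compactness rather than directly. By a standard counting argument on F\o{}lner sets, for each finitely generated subgroup $\frac1M\mathbb Z$ of $\mathbb Q$, some coset $c+\frac1M\mathbb Z$ meets $E_i$ with upper Banach density (relative to $\frac1M\mathbb Z\cong\mathbb Z$) at least $d^\star(E_i)-\epsilon$. To justify this, average over cosets: every F\o{}lner sequence in $\mathbb Q$ can be refined so that its pieces are approximately unions of long intervals in cosets of $\frac1M\mathbb Z$. This holds because $\frac1M\mathbb Z$ has finite index in no sense, but F\o{}lner sets for $\mathbb Q$ are asymptotically invariant under $\frac1M$, so they decompose into long $\frac1M$-progressions up to small error. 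Pigeonholing over these progressions yields a long progression $c+\frac1M[0,L)$ in which $E_i$ has density at least $d^\star(E_i)-\epsilon$, and letting $L\to\infty$ along a diagonal gives an integer set $A_i$ of the desired upper Banach density. The shifts may differ between progressions, but by translation invariance of $d^\star$ on $\mathbb Z$ together with a compactness argument (each finite configuration of $A_i$ is realized in some progression), the difference sets $A_i-A_i$ are contained in $M(E_i-E_i)$.

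Finally, I would work with the finitary statement: Corollary~\ref{cor1} applied to $A_1,A_2$ places a specific element $KmM^{-2}\cdot M^2=Km$ in $(A_1-A_1)^2-(A_2-A_2)^2$. Each witness uses only finitely many points, so only long finite windows are needed, and these are supplied by the progressions above. This avoids any subtlety about choosing a single global coset and completes the argument, giving $(E_1-E_1)^2-(E_2-E_2)^2=\mathbb Q$.
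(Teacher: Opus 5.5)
\textbf{A step in your transfer is false, though your own fallback can be repaired.} You claim that for each $M$ some single coset $c+\frac1M\mathbb Z$ meets $E_i$ with relative upper Banach density at least $d^\star(E_i)-\epsilon$. With that claim goes your opening framework $\frac1M A_i+c_i\subseteq E_i$ with $d^\star(A_i)>0$. In general no coset has even positive relative density. Take distinct primes $q_n>n$ and
\[
E=\bigcup_{n\ge1}\Bigl(\tfrac1{q_n}+\tfrac1{n!}\{0,1,\dots,n\cdot n!-1\}\Bigr).
\]
The progressions $\tfrac1{n!}\{0,\dots,n\cdot n!-1\}$ form a F\o{}lner sequence in $\mathbb Q$, so $d^\star(E)=1$. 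The sets $S_n=\tfrac1{q_n}+\tfrac1{n!}\mathbb Z$ are unions of cosets of $\mathbb Z$, and they are pairwise disjoint. Indeed, for $m<n$, elements of $S_n$ have reduced denominator divisible by $q_n$, while elements of $S_m$ have denominator dividing $q_m\,m!$. Moreover, $E\cap S_n$ lies in an interval of length $n$. Hence every coset of $\mathbb Z$, and so every coset of $\frac1M\mathbb Z$, meets $E$ in a finite set. Pigeonholing over F\o{}lner sets only gives, for each $L$, a dense window $x_L+\frac1M\{0,\dots,L-1\}$. The same follows at once from the finite-window assertion of Lemma~\ref{lem:density-facts} with $F=\frac1M\{0,\dots,L-1\}$. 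The coset of $x_L$ depends on $L$, and there is no diagonal passage to a single coset.

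\textbf{What survives and how to finish.} It suffices to have integer sets $A_i$ with $d^\star(A_i)\ge\alpha'$ and $A_i-A_i\subseteq M(E_i-E_i)$. Then $(A_1-A_1)^2-(A_2-A_2)^2\subseteq M^2\bigl((E_1-E_1)^2-(E_2-E_2)^2\bigr)$. However, a pointwise limit of the rescaled windows $W_L=\{k\in[0,L):x_L+k/M\in E_i\}$ can lose all its density, so this step needs an argument. One option is to recentre $W_L$ at a minimum point of $k\mapsto|W_L\cap[0,k)|-(d-\epsilon)k$; every later initial segment then has density at least $d-\epsilon$, and there are at least $\epsilon L$ of them. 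Another is the correspondence principle together with the ergodic theorem.

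Simpler is the paper's padding device. Integer solutions of $x^2-y^2=N\neq0$ satisfy $|x|,|y|\le(|N|+1)/2$. So you can periodize a single window with padding larger than this bound for $N=qM^2$, and cross-cell differences cannot contribute. You must also use a threshold form of Corollary~\ref{cor1}, namely Corollary~\ref{cor2} applied to $A_1\times A_2$ with threshold $\alpha'\beta'$. This is needed because $A_i$ depends on $M$, and $M$ is chosen after $K$.

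\textbf{Comparison with the paper.} With these repairs your route is valid but differs from the paper's. The paper forms $E_1\times E_2\subseteq\mathbb Q^2$, applies the automorphism $T(x,y)=(x-y,x+y)$, and invokes Corollary~\ref{rational1}. There a single window of $\mathbb Z^2$ is transferred (Lemma~\ref{lem:rational-transfer}), and one coordinate is rescaled by the density-preserving automorphism $L_{K/q}$. That reaches every $q$ directly and avoids your $q=r/s^2$, $Ks\mid M$ bookkeeping.
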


\begin{corollary}\label{rational3}
If $E\subseteq\mathbb Q^2$ has positive upper Banach density, then
\[
   \{x^2-y^2:(x,y)\in E-E\}=\mathbb Q.
\]
\end{corollary}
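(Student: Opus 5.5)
The plan is to deduce the statement from Corollary~\ref{rational1} by the change of variables already used for Corollary~\ref{cor2}. The map $T(x,y)=(x-y,\,x+y)$ is a group automorphism of $(\mathbb Q^2,+)$. Its inverse is
\[
   T^{-1}(u,v)=\Bigl(\tfrac{u+v}{2},\,\tfrac{v-u}{2}\Bigr),
\]
which is defined over $\mathbb Q$ because $2$ is invertible there. This is the point where working in $\mathbb Q^2$ rather than $\mathbb Z^2$ simplifies matters: $T$ is now surjective, so no index-$2$ sublattice or density loss has to be tracked.

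\textbf{Step 1: $T$ preserves upper Banach density.} Let $(F_n)$ be a F\o{}lner sequence in $\mathbb Q^2$ and put $F_n'=T(F_n)$. Since $T$ is a bijective additive map, for every $g\in\mathbb Q^2$ we have
\[
   (T(g)+F_n')\,\triangle\,F_n' = T\bigl((g+F_n)\triangle F_n\bigr).
\]
Hence $|(h+F_n')\triangle F_n'|/|F_n'|\to0$ for every $h=T(g)$, and every $h\in\mathbb Q^2$ has this form. So $(F_n')$ is again F\o{}lner. Moreover $|T(E)\cap T(F_n)|=|E\cap F_n|$ and $|T(F_n)|=|F_n|$. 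Taking the supremum over F\o{}lner sequences gives $d^\star(T(E))\ge d^\star(E)$. The same argument applied to $T^{-1}$ gives the reverse inequality, so $d^\star(T(E))=d^\star(E)>0$.

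\textbf{Step 2: identify the sets.} By additivity of $T$, $T(E)-T(E)=T(E-E)$. Therefore
\[
   P(T(E))=\{(x-y)(x+y):(x,y)\in E-E\}=\{x^2-y^2:(x,y)\in E-E\}.
\]
Applying Corollary~\ref{rational1} to $T(E)$, which has positive upper Banach density by Step~1, shows that this set equals $\mathbb Q$.

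There is no real obstacle here. The only point needing care is Step~1, the invariance of the F\o{}lner-sequence definition of $d^\star$ on $\mathbb Q^2$ under the automorphism $T$. This holds because the definition involves only the group structure and cardinalities, both of which $T$ preserves.
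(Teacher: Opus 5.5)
Your proposal is correct and matches the paper's proof: apply Corollary~\ref{rational1} to $T(E)$ with $T(x,y)=(x-y,x+y)$, using $T(E)-T(E)=T(E-E)$. The only difference is that you verify the invariance of upper Banach density under the automorphism $T$ directly via F\o{}lner sequences, whereas the paper cites this from Lemma~\ref{lem:density-facts}.
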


\medskip

\subsection*{Overview of the proof}

We briefly describe the mechanism behind
Theorem~\ref{thm:main}. To prove that a given non-zero integer $K$
belongs to $P(E)$, one has to find a difference vector
$(a,b)\in E-E$ such that
\[
   ab=K.
\]
Equivalently, one must show that the difference set $E-E$ intersects
the integer hyperbola
\[
   H_K:=\{(a,b)\in\mathbb Z^2:ab=K\}.
\]
The main difficulty is that upper Banach density is adapted to the
additive translation structure of $\mathbb Z^2$, whereas $H_K$ is a
sparse multiplicative configuration. Our proof constructs a sequence
of such hyperbolas, together with probability measures supported on
them whose Fourier transforms become asymptotically non-negative.
A spectral ratio bound then forces every sufficiently dense set to
have a difference on one of these hyperbolas. Finally, a stretching
argument upgrades the occurrence of one hyperbola to all integer
multiples of the same parameter.

The invariant-measure input is the classification of Bj\"orklund
and Fish \cite[Corollary~1.6]{arxiv}. The connection between
nonnegative Fourier transforms of invariant measures and
recurrence also appears in \cite[Lemma~8.5]{arxiv}.
Our argument obtains a lower bound uniform over frequencies,
realizes the resulting averages as finitely supported measures
on integer hyperbolas, and uses coordinate stretching to obtain
a common modulus. The proof proceeds in four steps.

\begin{enumerate}
\item[\textbf{Step 1.}]
\textbf{Reciprocal scaling on the rational solenoid.}
Let
\[
   X=\operatorname{Hom}(\mathbb Q,\mathbb T)
\]
be the Pontryagin dual of the discrete additive group $\mathbb Q$.
For $s\in\mathbb Q^\times$, consider the reciprocal scaling action
\[
   D_s(x,y)=(M_sx,M_{1/s}y)
\]
on $X^2$. The reciprocal form of this action reflects the algebraic
constraint that the product of the two coordinates remains fixed.

\item[\textbf{Step 2.}]
\textbf{Invariant-measure classification.}
We derive the classification from the disjointness theorem of
Einsiedler and Lindenstrauss for higher-rank actions by
automorphisms of solenoids, verifying the required algebraic
factor conditions. The classification implies that a suitable
test function has nonnegative integral against every probability
measure invariant under reciprocal scaling.

\item[\textbf{Step 3.}]
\textbf{F\o{}lner averaging and integer hyperbola measures.}
Averaging the reciprocal scaling action over explicit F\o{}lner sets
in $\mathbb Q^\times$ produces functions whose minima converge to
zero. After clearing denominators, these averages become Fourier
transforms of finitely supported symmetric probability measures
$\nu_j$ on $\mathbb Z^2$, supported on integer hyperbolas
\[
   ab=K_j,
\]
and satisfying
\[
   \widehat{\nu_j}\geq-\eta_j,
   \qquad
   \eta_j\longrightarrow0.
\]

\item[\textbf{Step 4.}]
\textbf{A Fourier ratio bound and stretching.}
A Hoffman-type Fourier ratio bound shows that a set
$E\subseteq\mathbb Z^2$ whose difference set avoids
$\operatorname{supp}\nu_j$ has upper Banach density at most
\[
   \frac{\eta_j}{1+\eta_j}.
\]
Choosing $j$ so that this quantity is smaller than the prescribed
density threshold forces $K_j\in P(E)$. Finally, the anisotropic
stretching
\[
   L_n(a,b)=(na,b)
\]
pushes the same measure to the hyperbola $ab=nK_j$ without changing
the Fourier lower bound. Hence every multiple of $K_j$ belongs to
$P(E)$.
\end{enumerate}

\section{Proof of Theorem \ref{thm:main}}

\subsection{Preliminaries on Upper Banach Density}

For $L\geq1$, write $Q_L=\{0,1,\ldots,L-1\}^2$. The density in the introduction can equivalently be computed as
\[
 d^\star(E)=\lim_{L\to\infty}\sup_{z\in\mathbb Z^2}
 \frac{|E\cap(z+Q_L)|}{L^2}.
\]
The next lemma relates this convention to the rectangular formulation used by Fish.

\begin{lemma}\label{lem:density-equivalence}
     For $E \subseteq \mathbb{Z}^2$, define the rectangular upper Banach density by$$d_{\text{rect}}^\star(E) = \limsup_{\min\{L,M\} \to \infty} \sup_{z \in \mathbb{Z}^2} \frac{\vert{}E \cap (z + (\{0,\dots,L-1\} \times \{0,\dots,M-1\}))\vert{}}{LM}.$$
Then $d_{\text{rect}}^\star(E) = d^\star(E)$. 
\end{lemma}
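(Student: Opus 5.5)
We prove the two inequalities separately, writing $R_{L,M}=\{0,\dots,L-1\}\times\{0,\dots,M-1\}$ and
\[
 s(L,M)=\sup_{z\in\mathbb Z^2}\frac{|E\cap(z+R_{L,M})|}{LM}.
\]
In this notation $d^\star(E)=\lim_{L\to\infty}s(L,L)$. We take as given that this limit exists; this is the standard subadditivity fact, and the paper states it just before the lemma.

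\textbf{The inequality $d^\star_{\mathrm{rect}}(E)\ge d^\star(E)$.} This is immediate. The squares $L=M\to\infty$ form a particular way of letting $\min\{L,M\}\to\infty$. The $\limsup$ over all rectangles is therefore at least the limit along squares.

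\textbf{The inequality $d^\star_{\mathrm{rect}}(E)\le d^\star(E)$.} Fix $\varepsilon>0$ and choose $N$ such that $s(N,N)\le d^\star(E)+\varepsilon$. Take any $L,M\ge N$ and any $z\in\mathbb Z^2$. Write $L=pN+r$ and $M=qN+t$ with $0\le r,t<N$.

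1. Tile the subrectangle $z+R_{pN,qN}$ by $pq$ disjoint translates of $Q_N$. Each translate meets $E$ in at most $s(N,N)N^2$ points.
2. The leftover part of $z+R_{L,M}$ is an L-shaped region. Its size is at most $rM+tL\le N(L+M)$.
3. Combining these gives
\[
\frac{|E\cap(z+R_{L,M})|}{LM}\le s(N,N)+\frac{N}{M}+\frac{N}{L}.
\]
4. Take the supremum over $z$, then the $\limsup$ as $\min\{L,M\}\to\infty$. The error terms vanish, so
\[
d^\star_{\mathrm{rect}}(E)\le s(N,N)\le d^\star(E)+\varepsilon.
\]
Since $\varepsilon>0$ was arbitrary, the inequality follows.

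\textbf{The one point needing care.} If the existence of the limit defining $d^\star(E)$ were not assumed, we would first prove it by the same tiling. Tiling a large square $Q_L$ by copies of $Q_N$ gives
\[
s(L,L)\le s(N,N)+\frac{2N}{L}.
\]
Hence $\limsup_{L}s(L,L)\le \inf_N s(N,N)$. Since also $\inf_N s(N,N)\le\liminf_L s(L,L)$, the limit exists and equals the infimum. So either way the argument is an elementary tiling estimate with boundary error $O(N/\min\{L,M\})$.
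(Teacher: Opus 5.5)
Your proof is correct and follows the paper's argument. The easy inequality comes from restricting to squares, and the reverse comes from tiling the rectangle by translated squares, leaving a remainder of relative size $O(N/L+N/M)$. The only difference is cosmetic: you fix the tile side $N$ and finish with an $\varepsilon$-argument, and you also justify that the square limit exists. The paper instead lets the tile side $r=\lfloor\sqrt{\min\{L,M\}}\rfloor$ grow with the rectangle and uses $a_r\to d^\star(E)$ directly.
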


\begin{proof}
The inequality $d^\star(E)\leq d_{\mathrm{rect}}^\star(E)$ follows by taking $L=M$. For the converse, set $r=\lfloor\sqrt{\min\{L,M\}}\rfloor$ and tile an $L\times M$ rectangle by disjoint translates of $Q_r$, leaving a remainder $U$ with
\[
 \frac{|U|}{LM}\leq\frac rL+\frac rM=o(1).
\]
If $a_r=\sup_z|E\cap(z+Q_r)|/r^2$, then every translate of the rectangle contains at most $a_r LM+|U|$ points of $E$. Since $r\to\infty$, taking the supremum over translates and then the limit superior gives $d_{\mathrm{rect}}^\star(E)\leq d^\star(E)$.
\end{proof}

In view of Lemma~\ref{lem:density-equivalence}, we use the square
formulation throughout. We also record the density facts needed
for the changes of variables and the passage to $\mathbb Q^2$.
\begin{lemma}\label{lem:density-facts}
Let $G$ be a countable discrete amenable group and $A\subseteq G$.
For a nonempty finite set $F\subseteq G$, put
\[
   a_A(F)=\sup_{x\in G}\frac{|A\cap(x+F)|}{|F|}.
\]
Then
\[
   d_G^\star(A)
   =
   \inf_{\substack{F\subseteq G\\0<|F|<\infty}}a_A(F).
\]
In particular, every nonempty finite $F\subseteq G$ has a translate
$x+F$ satisfying
\[
   |A\cap(x+F)|\geq d_G^\star(A)|F|.
\]

Moreover, group isomorphisms preserve upper Banach density. If $G,H$
are countable discrete amenable groups and $A\subseteq G$, $B\subseteq H$,
then
\begin{equation}\label{eq:density-product}
       d_{G\times H}^\star(A\times B)
   =
   d_G^\star(A)d_H^\star(B).
\end{equation}
Finally, if $A\subseteq\mathbb Z^d$ and
$T\in M_d(\mathbb Z)$ has nonzero determinant, then
\[
   d_{\mathbb Z^d}^\star(TA)
   =
   \frac{d_{\mathbb Z^d}^\star(A)}{|\det T|}.
\]
\end{lemma}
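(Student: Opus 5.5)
We will prove the four assertions in order; the infimum formula is the engine, and the other three follow from it by pushing finite windows across maps.

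\textbf{Step 1: the infimum formula.} For any F\o{}lner sequence $(F_n)$ and any nonempty finite $F$, averaging the indicator of $A$ over $x+F$ with $x$ ranging over $F_n$ gives
\[
\frac{|A\cap F_n|}{|F_n|}\le \frac{1}{|F_n|}\sum_{y\in F}\frac{|A\cap(F_n+y)|}{|F|}+o(1)\le a_A(F)+o(1),
\]
because $|F_n\triangle(F_n+y)|=o(|F_n|)$ for each fixed $y$. Hence $d^\star_G(A)\le\inf_F a_A(F)$.

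For the converse, let $c=\inf_F a_A(F)$. Fix any F\o{}lner sequence $(F_n)$ and choose $x_n$ with $|A\cap(x_n+F_n)|/|F_n|\ge a_A(F_n)-1/n\ge c-1/n$. Since $G$ is abelian, $(x_n+F_n)$ is again F\o{}lner, so $d^\star_G(A)\ge c$.

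The translate statement follows from this. Since $a_A(F)\ge d^\star$ and $|A\cap(x+F)|/|F|$ takes only finitely many values ($F$ is finite), the supremum is attained. This gives $|A\cap(x+F)|\ge d^\star|F|$ for some $x$.

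\textbf{Step 2: isomorphisms and products.} Invariance under isomorphisms is immediate, because an isomorphism maps F\o{}lner sequences to F\o{}lner sequences and translates to translates.

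For the product formula, the upper bound uses Step 1 with product windows $F\times F'$:
\[
a_{A\times B}(F\times F')=a_A(F)\,a_B(F'),
\]
since the supremum over $(x,x')$ splits into independent suprema. Taking the infimum over $F$ and $F'$ gives $\le$. For the lower bound, take F\o{}lner sequences $F_n$ in $G$ and $F'_n$ in $H$ realizing the densities of $A$ and $B$ along a common subsequence. Their products form a F\o{}lner sequence in $G\times H$, and the counts multiply.

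The main subtlety is the precise definition of $d^\star$ as a supremum of $\limsup$. One must pass to subsequences simultaneously so that both $\limsup$s are achieved along the same $n$.

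\textbf{Step 3: linear maps.} Let $\Lambda=T\mathbb Z^d$. This is a subgroup of index $D=|\det T|$, and $T\colon\mathbb Z^d\to\Lambda$ is an isomorphism.

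First, compare densities relative to $\Lambda$ and to $\mathbb Z^d$ for a set $B\subseteq\Lambda$: we claim $d^\star_{\mathbb Z^d}(B)=d^\star_\Lambda(B)/D$. Take a fundamental domain $R$ of $\Lambda$ with $|R|=D$.

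For the lower bound, if $F\subseteq\Lambda$ is F\o{}lner in $\Lambda$, then $F+R$ is F\o{}lner in $\mathbb Z^d$, has size $D|F|$, and meets $\Lambda$ exactly in $F$. This gives $\ge$.

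For the upper bound, use Step 1 with large cubes $Q$. A translate $z+Q$ meets each coset $r+\Lambda$ in a set that is essentially a translate of a large F\o{}lner set in $\Lambda$, of size about $|Q|/D$. Only the coset containing the points of $B$ contributes, so the count is at most $a_B(\cdot)|Q|/D+o(|Q|)$.

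Combining this with invariance under the isomorphism $T$ (which gives $d^\star_\Lambda(TA)=d^\star_{\mathbb Z^d}(A)$) yields the formula.

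We expect this last step to be the main obstacle. The difficulty is bookkeeping cube–lattice intersections uniformly in $z$. We would handle it by noting that $(z+Q_L)\cap(r+\Lambda)$, pulled back by $T^{-1}$, is the set of lattice points in a convex body of volume $L^d/D$. Its boundary error is $O(L^{d-1})$, uniformly in $z$.
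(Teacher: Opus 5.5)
Your Steps 1 and 2 are sound and match the paper. The product formula is argued exactly as there: product windows $F\times F'$ for $\le$, products of F\o{}lner sequences for $\ge$. Where the paper simply cites the infimum formula, your averaging argument plus maximizing translates $x_n+F_n$ prove it. The paper also just asserts $d^\star_{\mathbb Z^d}(B)=d^\star_\Lambda(B)/D$ for $B\subseteq\Lambda$, and your lower bound via the F\o{}lner sets $F_n+R$ (with $0\in R$) is fine.

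The step that does not go through as written is the upper bound in Step 3. Write $a^\Lambda_B$ and $a^{\mathbb Z^d}_B$ for window densities computed in $\Lambda$ and in $\mathbb Z^d$. If $z=\lambda+r$ with $\lambda\in\Lambda$ and $r\in R$, then $(z+Q_L)\cap\Lambda=\lambda+S_{L,r}$, where $S_{L,r}=(r+Q_L)\cap\Lambda$. So what you actually obtain is
\[
a^{\mathbb Z^d}_B(Q_L)\le\max_{r\in R}\frac{a^\Lambda_B(S_{L,r})\,|S_{L,r}|}{L^d}.
\]
To conclude you need $\limsup_{L\to\infty}a^\Lambda_B(S_{L,r})\le d^\star_\Lambda(B)$, and Step 1 does not provide this. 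The infimum formula gives $a^\Lambda_B(F)\ge d^\star_\Lambda(B)$ for every $F$, and only the \emph{existence} of nearly optimal windows. It does not say that these particular cube--lattice windows are nearly optimal.

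The missing ingredient is that $a_A(F_n)\to d^\star_G(A)$ along \emph{every} F\o{}lner sequence. It follows from your own converse argument: take $x_n$ attaining $a_A(F_n)$; since $(x_n+F_n)$ is F\o{}lner, $\limsup_n a_A(F_n)\le d^\star_G(A)$. But it has to be stated. You would also still need $(S_{L,r})_L$ to be F\o{}lner in $\Lambda$ with $|S_{L,r}|/L^d\to1/D$.

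The cube--lattice bookkeeping you identify as the main obstacle is in fact unnecessary if you use windows adapted to $\Lambda$. Take a finite nonempty $F'\subseteq\Lambda$ and $0\in R$. Each $z\in\mathbb Z^d$ has exactly one $r\in R$ with $z+r\in\Lambda$, so $(z+F'+R)\cap\Lambda=(z+r)+F'$ is a $\Lambda$-translate of $F'$. Every $\Lambda$-translate arises, by taking $z\in\Lambda$. Since $|F'+R|=D|F'|$, this gives $a^{\mathbb Z^d}_B(F'+R)=a^\Lambda_B(F')/D$ exactly. The infimum formula in $\mathbb Z^d$ then yields
\[
d^\star_{\mathbb Z^d}(B)\le\inf_{F'}a^{\mathbb Z^d}_B(F'+R)=\frac{d^\star_\Lambda(B)}{D}
\]
immediately, with no lattice-point counting. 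Together with your lower bound, this completes the index formula.
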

\begin{proof}
The infimum formula for upper Banach density is standard; see e.g.,
\cite[Lemma~2.9]{DownarowiczHuczekZhang}. The finite-window assertion
follows immediately.

Invariance under group isomorphisms is immediate from the infimum
formula. For product sets, if $F\subseteq G$ and $H_0\subseteq H$ are
finite and nonempty, then
\[
   a_{A\times B}(F\times H_0)
   =
   a_A(F)a_B(H_0).
\]
Taking infima gives
\[
   d_{G\times H}^\star(A\times B)
   \leq
   d_G^\star(A)d_H^\star(B).
\]
Conversely, for F\o{}lner sequences $(F_N)$ in $G$ and $(H_N)$ in $H$,
the sequence $(F_N\times H_N)$ is F\o{}lner in $G\times H$, and hence
\[
   d_{G\times H}^\star(A\times B)
   =
   \lim_{N\to\infty}
   a_{A\times B}(F_N\times H_N)
   =
   d_G^\star(A)d_H^\star(B).
\]

For the final assertion, let
\[
   \Lambda=T\mathbb Z^d,
   \qquad
   [\mathbb Z^d:\Lambda]=|\det T|.
\]
The map $T:\mathbb Z^d\to\Lambda$ is a group isomorphism, so
\[
   d_\Lambda^\star(TA)=d_{\mathbb Z^d}^\star(A).
\]
Since $\Lambda$ has index $|\det T|$ in $\mathbb Z^d$, a subset
$D\subseteq\Lambda$ satisfies
\[
   d_{\mathbb Z^d}^\star(D)
   =
   \frac1{|\det T|}d_\Lambda^\star(D).
\]
Applying this to $D=TA$ gives the result.
\end{proof}

\subsection{The Rational Solenoid and Pontryagin Duality}

Let $\mathbb T=\mathbb R/\mathbb Z$ and let
\[
 X=\Hom(\mathbb Q,\mathbb T)
\]
be the Pontryagin dual of the discrete additive group $\mathbb Q$, with the topology of pointwise convergence. For $q\in\mathbb Q$ and $s\in\mathbb Q^\times$, define
\[
 e_q(x)=\exp(2\pi i x(q)),\qquad (M_sx)(q)=x(sq).
\]
We record the basic properties used below.

\begin{lemma}\label{lem:rational-solenoid}
    The group $X$ is an infinite, compact, connected, metrizable abelian group. Its Pontryagin dual is canonically isomorphic to $\mathbb{Q}$, and its continuous characters are precisely the functions $e_q$ for $q \in \mathbb{Q}$. Moreover, the following properties hold:
    \begin{enumerate}
        \item Each scaling map $M_s$ is a continuous group automorphism, and the composition satisfies $M_s M_t = M_{st}$.

        \item On the dual space $\widehat{X} = \mathbb{Q}$, the induced dual map of $M_s$ is simply multiplication by $s$.

        \item The normalized Haar measure $m_X$ on $X$ is non-atomic and  $M_s$-invariant.

        \item The characters form an orthonormal system with respect to Haar measure:

                        $$\int_X e_q \, dm_X = \begin{cases} 1, & q = 0, \\ 0, & q \neq 0. \end{cases}$$
        \item Any finite Borel measure on $X$ is uniquely determined by its integrals against the family of characters $\{e_q\}_{q \in \mathbb{Q}}$.                 
    \end{enumerate}

\end{lemma}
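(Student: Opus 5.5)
The whole statement reduces to standard Pontryagin duality for the discrete abelian group $(\mathbb Q,+)$, so the plan is to quote the general theory and check the few specific points by hand. Since $\mathbb Q$ is discrete, $X=\Hom(\mathbb Q,\mathbb T)$ is a closed subset of the product $\mathbb T^{\mathbb Q}$: the homomorphism conditions $x(p+q)=x(p)+x(q)$ are closed. Hence $X$ is compact by Tychonoff. Because $\mathbb Q$ is countable, $\mathbb T^{\mathbb Q}$ is metrizable, and so is $X$. The duality theorem then identifies $\widehat X$ with $\mathbb Q$ via $q\mapsto e_q$; this gives the description of the continuous characters.

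Next, connectedness follows from the standard fact that the dual of a discrete group is connected if and only if the group is torsion-free, and $\mathbb Q$ is torsion-free. For infiniteness, observe that for every $t\in\mathbb R$ the map $q\mapsto tq \bmod 1$ lies in $X$, and distinct values of $t$ give distinct elements. This also shows $X$ is uncountable.

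For the enumerated items:
\begin{enumerate}
\item $M_s$ is the restriction to $X$ of the coordinate permutation $(x_q)\mapsto(x_{sq})$ of $\mathbb T^{\mathbb Q}$. It is therefore continuous and a homomorphism. We have $(M_sM_tx)(q)=(M_tx)(sq)=x(tsq)$, so $M_sM_t=M_{st}$, and $M_{1/s}$ is the inverse of $M_s$.
\item The identity $e_q\circ M_s=e_{sq}$ is immediate from the definitions, so the dual map of $M_s$ is multiplication by $s$.
\item Haar measure on an infinite compact group has no atoms, since translation invariance would give infinitely many atoms of equal positive mass. The pushforward $(M_s)_*m_X$ is a translation-invariant probability measure because $M_s$ is an automorphism, so by uniqueness of Haar measure it equals $m_X$.
\item If $q\neq0$ then $e_q$ is nontrivial, so there is $y$ with $e_q(y)\ne1$. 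Invariance of Haar measure gives $\int e_q\,dm_X=e_q(y)\int e_q\,dm_X$, which forces the integral to be $0$. Orthonormality follows from $e_q\overline{e_{p}}=e_{q-p}$.
\item The characters separate points of $X$ by definition and are closed under products and conjugation. By Stone--Weierstrass their span is uniformly dense in $C(X)$, and the Riesz representation theorem then gives uniqueness of a finite measure with prescribed Fourier coefficients.
\end{enumerate}

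The only step that is not purely formal is connectedness. If one prefers not to cite the torsion-free criterion, argue directly. A proper clopen subset would give a nonconstant continuous function $X\to\{0,1\}$. By (5), or by the standard consequence that a nontrivial open–closed subgroup yields a finite quotient, one would obtain a nontrivial finite-order character $e_q$ with $e_q^n=e_{nq}=1$. Then $nq=0$, so $q=0$, a contradiction.
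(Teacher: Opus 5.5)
Your proposal is correct, and apart from connectedness it matches the paper's proof. Both use the same embedding in $\mathbb T^{\mathbb Q}$ for compactness and metrizability, the same maps $q\mapsto tq \bmod 1$ for infiniteness, and the same arguments for items (1)--(5): translation invariance for (4) and Stone--Weierstrass for (5).

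For connectedness, the paper stays elementary. The coordinates $t_n=x(1/n!)$ satisfy $t_n=(n+1)t_{n+1}$ and identify $X$ with $\varprojlim(\mathbb T\xleftarrow{\times2}\mathbb T\xleftarrow{\times3}\cdots)$. This is an inverse limit of circles with surjective bonding maps, hence connected. You instead quote the duality criterion that the dual of a discrete abelian group is connected if and only if the group is torsion-free. That is shorter and covers every torsion-free discrete group, but it rests on a heavier theorem, whereas the paper's argument also exhibits the concrete solenoid structure.

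Your fallback direct argument needs one repair. A proper clopen subset is not a subgroup, and item (5), a uniqueness statement for measures, does not produce a finite-order character. The missing input is that in a compact group the identity component is the intersection of the open subgroups. With that fact the argument goes through:
\begin{enumerate}
\item Disconnectedness gives a proper open subgroup $U$ with $X/U$ finite and nontrivial.
\item A nontrivial character of $X/U$ pulls back to some $e_q\neq 1$ with $e_{nq}=e_q^n=1$, where $n=[X:U]$.
\item Injectivity of $q\mapsto e_q$ then forces $nq=0$, so $q=0$, a contradiction.
\end{enumerate}
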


\begin{proof}
The group $X$ is a closed subgroup of the countable product $\mathbb T^{\mathbb Q}$, hence compact and metrizable. The coordinates $t_n=x(1/n!)$ identify it with
\[
 \varprojlim\bigl(\mathbb T\xleftarrow{\times2}\mathbb T
 \xleftarrow{\times3}\mathbb T\xleftarrow{\times4}\cdots\bigr).
\]
Indeed, $t_n=(n+1)t_{n+1}$, and these relations define $x$ on every rational number. The bonding maps are surjective, so this inverse limit is connected. The maps $x_t(q)=tq\pmod1$, $t\in\mathbb R$, show that each nonzero coordinate projection is onto $\mathbb T$; in particular, $X$ is infinite. Pontryagin duality identifies $\widehat X$ with $\mathbb Q$, with $q$ corresponding to $e_q$.

The identities $M_sM_t=M_{st}$ and $M_s^{-1}=M_{1/s}$, together with coordinatewise continuity, prove (1). The relation $e_q\circ M_s=e_{sq}$ proves (2). Every continuous automorphism preserves normalized Haar measure. Moreover, Haar measure on an infinite compact group is non-atomic, since all singleton masses are equal by translation invariance. This proves (3).

For $q\neq0$, choose $y\in X$ with $e_q(y)\neq1$. Translation invariance gives
\[
 \int_X e_q\,dm_X=e_q(y)\int_X e_q\,dm_X,
\]
so the integral vanishes. The integral of $e_0=1$ is one, proving (4). Finally, the linear span of the characters is a self-adjoint unital algebra separating points of $X$. Stone--Weierstrass and uniqueness of finite Borel measures from their continuous integrals prove (5).
\end{proof}

The group $X$ is the one-dimensional rational solenoid. In the terminology used in \cite{EinsiedlerLindenstrauss2022}, a \emph{solenoid} is a compact connected abelian group whose Pontryagin dual embeds in a finite-dimensional $\mathbb Q$-vector space; the definition is not restricted to rank one.

\subsection{Invariant Marginals and Reciprocal Disjointness}\label{section disjoint}

We first classify the probability measures on $X$ invariant under every $M_s$, $s\in\mathbb Q^\times$.

\begin{lemma}\label{lem:marginals}
    Let $\rho \in \text{Prob}(X)$ be a probability measure satisfying$$(M_s)_* \rho = \rho \quad \text{for all } s \in \mathbb{Q}^\times.$$
Then there exists a unique constant $c \in [0, 1]$ such that
  \begin{equation}\label{eq:marginal-classification}
 \rho=c\delta_0+(1-c)m_X.
\end{equation}
Moreover, this constant represents the atomic mass at the identity: $c = \rho(\{0\})$.  
\end{lemma}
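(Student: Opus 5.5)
The plan is to compute the Fourier coefficients of $\rho$ and show they are forced to be constant on $\mathbb Q\setminus\{0\}$. Then Lemma~\ref{lem:rational-solenoid}(5) identifies $\rho$.

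For $q\in\mathbb Q$ put $\hat\rho(q)=\int_X e_q\,d\rho$. Invariance together with $e_q\circ M_s=e_{sq}$ from Lemma~\ref{lem:rational-solenoid}(2) gives
\[
\hat\rho(sq)=\int_X e_q\circ M_s\,d\rho=\int_X e_q\,d(M_s)_*\rho=\hat\rho(q)
\]
for all $s\in\mathbb Q^\times$. Since $\mathbb Q^\times$ acts transitively on $\mathbb Q\setminus\{0\}$, there is a single constant $c\in\mathbb C$ with $\hat\rho(q)=c$ for all $q\neq0$. Also $\hat\rho(0)=1$. Next consider the measure $\mu=c\delta_0+(1-c)m_X$. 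By Lemma~\ref{lem:rational-solenoid}(4), $\hat\mu(0)=1$ and $\hat\mu(q)=c$ for $q\neq0$, so $\mu$ and $\rho$ have the same integrals against every character. A priori $\mu$ is only a complex measure. I therefore want to apply uniqueness in the form that finite complex Borel measures are determined by their integrals against continuous functions; the Stone--Weierstrass argument in Lemma~\ref{lem:rational-solenoid}(5) gives this just as well. Hence $\rho=\mu$.

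It remains to identify $c$ and to show it lies in $[0,1]$. Since $m_X$ is non-atomic by Lemma~\ref{lem:rational-solenoid}(3), evaluating $\rho=\mu$ at $\{0\}$ gives $\rho(\{0\})=c$. Because $\rho$ is a probability measure, this shows $c$ is real and lies in $[0,1]$. Uniqueness of $c$ follows in the same way: any representation of the form \eqref{eq:marginal-classification} forces $c=\rho(\{0\})$, again because $m_X$ has no atoms.

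The only delicate point is the identification step. One can either phrase uniqueness for complex measures, or first show that $c$ is real and in $[0,1]$ so that $\mu$ is a genuine probability measure. The second route is also easy: $c=\hat\rho(q)=\overline{\hat\rho(-q)}=\bar c$ gives reality, and positivity comes from the atom computation above. Everything else follows directly from the transitivity of the multiplicative action on nonzero frequencies.
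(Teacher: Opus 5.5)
Your argument is correct. It differs from the paper's only in how the reality and sign of $c$ are handled.

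\textbf{The paper's route.} It first gets $c\in\mathbb R$ from $\widehat\rho(-q)=\overline{\widehat\rho(q)}$. It then proves $c\geq0$ by a positive-definiteness computation: for distinct $q_1,\dots,q_N$,
\[
0\leq\int_X\bigl|\textstyle\sum_{i=1}^N e_{q_i}\bigr|^2\,d\rho=N+N(N-1)c,
\]
and $N\to\infty$ gives $c\geq0$. Only then is $c\delta_0+(1-c)m_X$ known to be a probability measure, so Lemma~\ref{lem:rational-solenoid}(5) applies exactly as stated, to finite positive measures. Non-atomicity of $m_X$ then gives $c=\rho(\{0\})$.

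\textbf{Your route.} You identify $\rho$ immediately with the a priori complex measure $c\delta_0+(1-c)m_X$. This uses uniqueness for finite complex Borel measures, which the same Stone--Weierstrass argument together with the Riesz representation theorem on a compact metrizable space does give. You then read off $c=\rho(\{0\})\in[0,1]$ afterwards. This is shorter and makes positivity automatic. The paper's route stays within probability measures and uses the lemma without any extension.

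\textbf{Caveat on your closing remark.} The ``second route'' you sketch is circular as written. There, positivity is supposed to come from the atom computation, but that computation uses $\rho=\mu$. In that route, $\rho=\mu$ was to be deduced only after $\mu$ is known to be a probability measure. To make that route work you need an independent proof of $c\geq0$. One option is the paper's inequality above. Another, once $c$ is real, is to move the negative part across: if $c<0$, then $\rho+|c|\delta_0$ and $(1+|c|)m_X$ are positive measures with the same Fourier coefficients, hence equal, and evaluating at $\{0\}$ gives a contradiction. Your main argument does not depend on this remark, so the proof stands.
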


\begin{proof}
Write $\widehat\rho(q)=\int_X e_q\,d\rho$. Since $e_q\circ M_s=e_{sq}$, invariance implies that $\widehat\rho(q)=c$ is constant for $q\neq0$. The identity $\widehat\rho(-q)=\overline{\widehat\rho(q)}$ shows that $c$ is real, and $|c|\leq1$.

For any distinct $q_1,\ldots,q_N\in\mathbb Q$,
\[
 0\leq\int_X\left|\sum_{i=1}^N e_{q_i}\right|^2d\rho
   =N+N(N-1)c.
\]
Letting $N\to\infty$ gives $c\geq0$. Thus $c\delta_0+(1-c)m_X$ is a probability measure with the same Fourier coefficients as $\rho$. Lemma~\ref{lem:rational-solenoid}(5) gives \eqref{eq:marginal-classification}. Since $m_X$ is non-atomic, $c=\rho(\{0\})$, which also proves uniqueness.
\end{proof}

 We recall the relevant terminology:
    \begin{enumerate}
        \item An action $\gamma$ of $\mathbb{Z}^d$ is {\it virtually cyclic} if there exists a finite-index subgroup $\Lambda \le \mathbb{Z}^d$ and a vector $v \in \mathbb{Z}^d$ such that for every $n \in \Lambda$, one has $\gamma^n = \gamma^{kv}$ for some $k \in \mathbb{Z}$. 

        \item An algebraic factor map is a surjective continuous group homomorphism that intertwines two actions with the same parameter (with no reparameterization of $\mathbb{Z}^d$ permitted).  
    \end{enumerate}
A \emph{joining} of measure-preserving actions with the same acting group is an invariant probability measure on their product whose coordinate marginals are the given invariant measures. The actions are \emph{mutually disjoint} if their product measure is the only joining.

\begin{theorem}[{Einsiedler--Lindenstrauss \cite[Corollary~1.4, general case]{EinsiedlerLindenstrauss2022}}]\label{thm:EL-disjointness}
Let $d,r\geq2$, and let $\alpha_i$, $1\leq i\leq r$, be $\mathbb Z^d$-actions by automorphisms of solenoids, each having no nontrivial virtually cyclic algebraic factor. The actions are not mutually disjoint with respect to Haar measure if and only if there exist distinct $i,j$, a finite-index subgroup $\Lambda\leq\mathbb Z^d$, and a nontrivial $\Lambda$-action on a solenoid that is an algebraic factor of both $\alpha_i|_\Lambda$ and $\alpha_j|_\Lambda$.
\end{theorem}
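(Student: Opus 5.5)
This statement is quoted from \cite{EinsiedlerLindenstrauss2022}, and in the paper it is simply invoked. If I had to prove it, I would split it into the two implications and put nearly all the work into the direction ``non-disjoint $\Rightarrow$ common factor''.

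\textbf{The easy direction.} Suppose $\alpha_i|_\Lambda$ and $\alpha_j|_\Lambda$ have a nontrivial common algebraic factor $\beta$ on a solenoid $Y$. Let $\pi_i:X_i\to Y$ and $\pi_j:X_j\to Y$ be the factor maps.
\begin{enumerate}
\item Form the fibered subgroup $\{(x,x'):\pi_i(x)=\pi_j(x')\}$ and take its Haar measure. This is the relatively independent joining over $\beta$.
\item This measure is $\Lambda$-invariant and has Haar marginals. It is not the product measure, because $Y$ is infinite (it is connected and nontrivial).
\item Average its pushforwards over coset representatives of $\mathbb Z^d/\Lambda$. This gives a $\mathbb Z^d$-invariant joining of $\alpha_i$ and $\alpha_j$.
\item Averaging cannot recover the product measure. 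Every summand is supported on a proper closed subgroup coset, and there are only finitely many summands, so the average is singular to Haar on $X_i\times X_j$.
\end{enumerate}
Tensoring this with Haar measure on the remaining factors gives a non-product joining of all $r$ actions.

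\textbf{The hard direction: measure rigidity.} Let $\mu$ be a joining other than the product measure. I would pass to ergodic components; since the marginals are Haar and Haar is ergodic here, a.e.\ component still has Haar marginals. The key step is to show that every ergodic joining is algebraic: after restricting to a finite-index $\Lambda$, it is Haar measure on a closed subgroup coset. I would do this via the high-entropy method using leafwise measures.
\begin{enumerate}
\item Each marginal is Haar, so each coordinate has positive entropy.
\item The hypothesis of no virtually cyclic factor ensures that the Lyapunov weights are not all proportional, so distinct coarse Lyapunov foliations exist.
\item Use the product structure of entropy contributions along the coarse Lyapunov subgroups, together with commutator and Abramov--Rokhlin arguments, to show that the leafwise measures are Haar along certain subgroups.
\item Conclude that $\mu$ is invariant under a closed subgroup large enough to make it algebraic. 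This must be carried out over the adelic completions, since the solenoids have $p$-adic directions.
\end{enumerate}

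\textbf{From algebraic joining to common factor.} Once $\mu$ is Haar on a coset of a closed subgroup $H\subseteq\prod X_k$ with full projections, I would argue as follows.
\begin{enumerate}
\item Since $\mu$ is not the product measure, $H$ is proper.
\item Reduce to a pair: there are distinct indices $i,j$ such that the projection $H_{ij}$ of $H$ to $X_i\times X_j$ is proper but still surjects onto each factor.
\item By Goursat's lemma, $X_i/(H_{ij}\cap X_i)$ and $X_j/(H_{ij}\cap X_j)$ are isomorphic.
\item This common quotient is a nontrivial $\Lambda$-equivariant algebraic factor of both actions. It is connected, and its dual embeds in the dual of $X_i$, so it is a solenoid.
\end{enumerate}

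\textbf{Expected obstacle.} The rigidity step is where I expect the difficulty to lie. Two points need care: ruling out zero-entropy contributions that could hide along a single Lyapunov direction, which is exactly where the no-virtually-cyclic-factor hypothesis is used, and handling the non-archimedean places of the solenoid.
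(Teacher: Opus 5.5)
The paper does not prove this statement. It quotes it from Einsiedler--Lindenstrauss and uses it only with $r=2$, in Proposition~\ref{eq:alpha-beta-actions}, so there is no in-paper argument to compare against.

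Your easy direction is correct. The fibered subgroup has index $|Y|=\infty$, and the averaged measure lives on finitely many infinite-index closed subgroups of the connected group $X_i\times X_j$, so it is singular to Haar.

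Your rigidity step is an outline of the cited theorem rather than an argument. Its ``commutator'' ingredient also cannot be what produces invariance here, since all commutators vanish in abelian groups. The place where the proposal actually fails, however, is the final stage.

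\textbf{The gap.} The step ``reduce to a pair'' is false for $r\geq3$. A proper closed invariant subgroup $H\subseteq\prod_k X_k$ with all one-coordinate projections onto need not have any proper two-coordinate projection. For example, take $\alpha_1=\alpha_2=\alpha_3$ to be the action $(a,b)\mapsto M_{2^a3^b}$ on the rational solenoid $X$, and let $H=\{(x_1,x_2,x_3)\in X^3:x_1+x_2+x_3=0\}$.
\begin{itemize}
\item Haar measure on $H$ is an ergodic, algebraic, non-product joining.
\item Yet every projection $H_{ij}$ equals $X_i\times X_j$; equivalently, every pairwise marginal of $m_H$ is product Haar.
\item So Goursat's lemma applied to pairs yields nothing. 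The conclusion of the theorem still holds in this example, but not for the reason you give.
\end{itemize}

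\textbf{The repair: work dually.}
\begin{enumerate}
\item The annihilator $H^\perp\subseteq\bigoplus_k\widehat{X_k}$ is a nonzero subgroup, invariant under the coordinatewise dual $\Lambda$-action. Invariance under this action preserves supports.
\item Choose $S\subseteq\{1,\dots,r\}$ minimal among the supports of nonzero elements of $H^\perp$. Then $|S|\geq2$, because each $\pi_k(H)=X_k$.
\item Put $N=H^\perp\cap\bigoplus_{k\in S}\widehat{X_k}$. Minimality of $S$ makes every coordinate projection $p_k|_N$, $k\in S$, injective.
\item Hence for distinct $i,j\in S$ there are $\Lambda$-module isomorphisms $p_i(N)\cong N\cong p_j(N)$.
\item Dualizing the inclusions $p_i(N)\hookrightarrow\widehat{X_i}$ and $p_j(N)\hookrightarrow\widehat{X_j}$ exhibits $\widehat N$ as a nontrivial common algebraic factor of $\alpha_i|_\Lambda$ and $\alpha_j|_\Lambda$.
\item $\widehat N$ is a solenoid, since $N$ is torsion-free of finite rank.
\end{enumerate}
For $r=2$ this reduces to your Goursat argument, because then $H_{12}=H$. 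That is why the gap does not affect the paper's application.
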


We deploy this theorem on $X^2$ using two specific actions generated by mutually reciprocal multipliers. We define two $\mathbb{Z}^2$-actions on $X$ by:$$\alpha^{(a,b)} = M_{2^a 3^b}, \quad \text{and} \quad \beta^{(a,b)} = M_{2^{-a} 3^{-b}}.$$

\begin{proposition}\label{eq:alpha-beta-actions}
    The Haar systems $(X, m_X, \alpha)$ and $(X, m_X, \beta)$ are disjoint. 
\end{proposition}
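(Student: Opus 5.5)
The plan is to apply Theorem~\ref{thm:EL-disjointness} with $d=r=2$, $\alpha_1=\alpha$ and $\alpha_2=\beta$. Both are $\mathbb Z^2$-actions by automorphisms of the solenoid $X$, using Lemma~\ref{lem:rational-solenoid}(1). Write $s_n=2^{a}3^{b}$ for $n=(a,b)$. By unique factorization, $n\mapsto s_n$ is an injective homomorphism $\mathbb Z^2\to\mathbb Q^\times_{>0}$. Once the hypotheses are checked, the theorem says that failure of disjointness requires a common nontrivial algebraic factor on some finite-index subgroup. So I will rule out two things: nontrivial virtually cyclic algebraic factors of each action, and nontrivial common factors of $\alpha|_\Lambda$ and $\beta|_\Lambda$.

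The key step is to describe algebraic factors by duality. Let $\pi:X\to Y$ be a surjective continuous homomorphism onto a solenoid $Y$. It intertwines $\alpha|_\Lambda$ with some $\Lambda$-action $\gamma$ on $Y$. Then $\widehat\pi$ identifies $\widehat Y$ with a subgroup $A\leq\widehat X=\mathbb Q$, and $A\neq0$ when $Y$ is nontrivial. By Lemma~\ref{lem:rational-solenoid}(2), the dual of $\alpha^n$ is multiplication by $s_n$. Intertwining therefore says that $A$ is invariant under multiplication by $s_n$ for $n\in\Lambda$, and that $\widehat{\gamma^n}$ is the restriction of this multiplication to $A$. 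The same holds for $\beta$, with $s_n^{-1}$ in place of $s_n$. I also use one fact: every endomorphism of a nonzero subgroup $A\leq\mathbb Q$ is multiplication by a unique rational number. This holds because $A$ has rank one, so the endomorphism extends uniquely to $A\otimes\mathbb Q=\mathbb Q$.

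\emph{No virtually cyclic factors.} Suppose $\gamma$ is a nontrivial factor of $\alpha|_\Lambda$, and suppose there are a finite-index $\Lambda'$ and a vector $v$ with $\gamma^n=\gamma^{kv}$ for each $n\in\Lambda'$. Dualizing on $A\neq0$ and using uniqueness of the scalar gives $s_n=s_v^{k}=s_{kv}$. Injectivity then gives $n=kv$. Hence $\Lambda'\subseteq\mathbb Zv$, which is impossible for a finite-index subgroup of $\mathbb Z^2$. The same argument applies to $\beta$.

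\emph{No common factor.} Suppose a nontrivial $\Lambda$-action $\gamma$ on a solenoid $Y$ is an algebraic factor of both $\alpha|_\Lambda$ and $\beta|_\Lambda$. For $n\in\Lambda$, the dual map $\widehat{\gamma^n}$ is an endomorphism of the nonzero rank-one group $\widehat Y$, so it is multiplication by a rational $t_n$. Transporting through the first embedding of $\widehat Y$ into $\mathbb Q$ gives $t_n=s_n$, and through the second gives $t_n=s_n^{-1}$. Thus $s_n^2=1$, so $s_n=1$ and $n=0$ for all $n\in\Lambda$. This contradicts $\Lambda$ having finite index. Theorem~\ref{thm:EL-disjointness} then gives that the product Haar measure is the only joining, which is the claimed disjointness.

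The main obstacle I expect is making the duality dictionary precise. One point is that an algebraic factor with no reparametrization dualizes to an invariant subgroup on which the factor action is the restricted multiplication. The other is the rank-one scalar argument, which lets the two embeddings be compared intrinsically even though they are different subgroups of $\mathbb Q$.
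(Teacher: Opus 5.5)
Your proposal is correct and follows the paper's proof. Both apply Theorem~\ref{thm:EL-disjointness} with $d=r=2$, rule out virtually cyclic factors by the faithfulness of the dual multipliers $s_n=2^a3^b$, and rule out a common factor by comparing $s_n$ with $s_n^{-1}$ through the two dual embeddings $\widehat Y\hookrightarrow\mathbb Q$. The only difference is minor: the paper uses a single element $(l,0)\in\Lambda$, whose multiplier $2^l$ is an integer, so $Th=2^lh$ follows from additivity and injectivity alone. You instead use the valid fact that endomorphisms of a rank-one torsion-free group are canonical rational scalars, which handles every $n\in\Lambda$ at once.
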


\begin{proof}
  We check the two requirements of Theorem~\ref{thm:EL-disjointness}.

First, let $\pi:X\to Y$ be a nontrivial algebraic factor of $\alpha$. Its dual is an injection $\iota:\widehat Y\hookrightarrow\mathbb Q$. If $(a,b)\in\mathbb Z^2$ acts trivially on $Y$, then
\[
 (2^a3^b-1)\iota(h)=0\qquad(h\in\widehat Y).
\]
Taking $h\neq0$ gives $2^a3^b=1$, hence $a=b=0$. Thus every nontrivial factor action is faithful. A faithful $\mathbb Z^2$-action cannot be virtually cyclic: its restriction to a finite-index subgroup is still injective, whereas that subgroup cannot embed in a cyclic group. The same argument applies to $\beta$.

Second, let $\Lambda\leq\mathbb Z^2$ have finite index, and choose $l\geq1$ with $(l,0)\in\Lambda$. Suppose a nontrivial $\Lambda$-action $\gamma$ on a solenoid $Y$ is a common algebraic factor of $\alpha|_\Lambda$ and $\beta|_\Lambda$. The two factor maps induce injections
\[
 \iota_+,\iota_-:\widehat Y\longrightarrow\mathbb Q.
\]
If $T$ is the dual automorphism of $\gamma^{(l,0)}$, equivariance gives
\[
 \iota_+(Th)=2^l\iota_+(h),\qquad
 \iota_-(Th)=2^{-l}\iota_-(h).
\]
Because $2^l$ is an integer, additivity and injectivity of $\iota_+$ imply $Th=2^lh$. Substitution into the second identity yields
\[
 (2^{2l}-1)\iota_-(h)=0\qquad(h\in\widehat Y).
\]
Hence $\widehat Y=\{0\}$, a contradiction. Theorem~\ref{thm:EL-disjointness} now gives disjointness. 
\end{proof}

\subsection{Classification under Reciprocal Rational Scaling}
For $s\in\mathbb Q^\times$, define the reciprocal scaling automorphism $D_s:X^2\to X^2$ by
\begin{equation}\label{eq:Ds-definition}
 D_s(x,y)=(M_sx,M_{1/s}y),\quad \text{for } (x,y)\in X^2.
\end{equation}

We have $D_sD_t=D_{st}$. The test function used below is  \begin{equation}\label{eq:C-definition}
 C(x,y)=\Ree\bigl(e_1(x)e_1(y)\bigr)
       =\cos\bigl(2\pi(x(1)+y(1))\bigr).
\end{equation}

    The following classification is a specialization of
\cite[Corollary~1.6]{arxiv} to $\mathbb Q$. We nevertheless include a
proof based on the preceding disjointness result, since the argument
makes explicit the four invariant measures that enter the Fourier
analysis below.

\begin{proposition}\label{prop:joint-classification}
   Let $\sigma \in \text{Prob}(X^2)$ be a probability measure satisfying$$(D_s)_* \sigma = \sigma \quad \text{for all } s \in \mathbb{Q}^\times.$$
Then $\sigma$ is a convex combination of the following four canonical product measures:
  $$\delta_0 \otimes \delta_0, \quad m_X \otimes \delta_0, \quad \delta_0 \otimes m_X, \quad \text{and} \quad m_X \otimes m_X.$$
In particular,
\[
   \int_{X^2} C\,d\sigma
   =
   \sigma(\{(0,0)\})
   \geq0.
\]
\end{proposition}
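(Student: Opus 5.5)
Let $\sigma$ be $D_s$-invariant for all $s\in\mathbb Q^\times$. The plan is to reduce to the disjointness in Proposition~\ref{eq:alpha-beta-actions}. We split $X^2$ according to whether each coordinate is zero, normalize each piece to a probability measure, and show that each normalized piece is one of the four listed product measures.

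\textbf{Marginals.} The two marginals $\pi_{1*}\sigma$ and $\pi_{2*}\sigma$ are invariant under all $M_s$; for the second this uses that $s\mapsto 1/s$ is a bijection of $\mathbb Q^\times$. By Lemma~\ref{lem:marginals} each marginal has the form $c_i\delta_0+(1-c_i)m_X$.

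\textbf{Decomposition.} Write $X^2$ as the disjoint union of $\{0\}\times\{0\}$, $(X\setminus\{0\})\times\{0\}$, $\{0\}\times(X\setminus\{0\})$ and $(X\setminus\{0\})^2$. Each piece is invariant under every $D_s$, since each $M_s$ fixes $0$ and permutes $X\setminus\{0\}$. Hence the restriction of $\sigma$ to each piece is again $D_s$-invariant, and we treat the pieces one at a time.

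\begin{enumerate}
\item On $\{(0,0)\}$ the restriction is a multiple of $\delta_0\otimes\delta_0$.
\item On $(X\setminus\{0\})\times\{0\}$ the restriction is $\rho\otimes\delta_0$ for an $M_s$-invariant measure $\rho$ on $X$ with $\rho(\{0\})=0$. By Lemma~\ref{lem:marginals}, $\rho$ is a multiple of $m_X$.
\item The piece $\{0\}\times(X\setminus\{0\})$ is handled symmetrically and gives a multiple of $\delta_0\otimes m_X$.
\item On $(X\setminus\{0\})^2$, let $\tau$ be the normalized restriction. Both marginals of $\tau$ are $M_s$-invariant and give zero mass to $0$, so by Lemma~\ref{lem:marginals} both equal $m_X$. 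Restrict the invariance to $s=2^a3^b$. Then $D_{2^a3^b}$ acts as $\alpha^{(a,b)}\times\beta^{(a,b)}$, so $\tau$ is a joining of the Haar systems $(X,m_X,\alpha)$ and $(X,m_X,\beta)$. Proposition~\ref{eq:alpha-beta-actions} gives $\tau=m_X\otimes m_X$.
\end{enumerate}

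Adding the four pieces expresses $\sigma$ as the claimed convex combination.

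\textbf{Integral.} Since $C=\Ree(e_1\otimes e_1)$, the integral against a product measure factors into integrals of $e_1$ against each factor. We have $\int e_1\,d\delta_0=1$ and $\int e_1\,dm_X=0$ by Lemma~\ref{lem:rational-solenoid}(4). So $\int C$ equals $1$ against $\delta_0\otimes\delta_0$ and $0$ against the other three measures. Hence $\int C\,d\sigma$ is the weight on $\delta_0\otimes\delta_0$. Because $m_X$ is non-atomic, the other three measures give zero mass to $(0,0)$, so this weight equals $\sigma(\{(0,0)\})\ge 0$.

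\textbf{Main obstacle.} The delicate step is the reduction to disjointness in item 4. One must check that the restricted measure really has Haar marginals, which uses Lemma~\ref{lem:marginals} together with zero mass at $0$. One must also check that a joining for the $\mathbb Z^2$-subaction generated by $2$ and $3$ suffices. It does: the disjointness statement already forces the product measure, and product Haar measure is automatically invariant under all $D_s$.
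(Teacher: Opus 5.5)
Your proposal is correct and follows essentially the same route as the paper. It uses the same partition of $X^2$ into four $D_s$-invariant pieces according to which coordinates vanish, and Lemma~\ref{lem:marginals} to identify the three degenerate pieces. It handles the piece $(X\setminus\{0\})^2$ through the disjointness of the Haar systems $\alpha$ and $\beta$, via $D_{2^a3^b}=\alpha^{(a,b)}\times\beta^{(a,b)}$, and evaluates $\int C\,d\sigma$ by character orthogonality (your opening remark on the marginals of $\sigma$ itself is never used and can be dropped).
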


\begin{proof}
Put $X^\times=X\setminus\{0\}$ and partition $X^2$ into the four invariant Borel sets
\[
 \begin{aligned}
 \Omega_{00}&=\{0\}\times\{0\},&
 \Omega_{10}&=X^\times\times\{0\},\\
 \Omega_{01}&=\{0\}\times X^\times,&
 \Omega_{11}&=X^\times\times X^\times.
 \end{aligned}
\]
Whenever $\sigma(\Omega_{ij})>0$, let $\sigma_{ij}$ be the normalized restriction to $\Omega_{ij}$. Each coordinate marginal of $\sigma_{ij}$ is invariant under all $M_s$: on the second coordinate this follows because $s\mapsto s^{-1}$ permutes $\mathbb Q^\times$.

Lemma~\ref{lem:marginals} shows that a marginal supported at $0$ is $\delta_0$, while a marginal assigning no mass to $0$ is $m_X$. Consequently,
\[
 \sigma_{00}=\delta_0\otimes\delta_0,\quad
 \sigma_{10}=m_X\otimes\delta_0,\quad
 \sigma_{01}=\delta_0\otimes m_X
\]
whenever the corresponding restriction is defined. The measure $\sigma_{11}$ has both marginals equal to $m_X$ and is invariant, in particular, under $D_{2^a3^b}=\alpha^{(a,b)}\times\beta^{(a,b)}$. Proposition~\ref{eq:alpha-beta-actions} therefore gives $\sigma_{11}=m_X\otimes m_X$.

Recombining the restrictions proves the asserted convex combination. By character orthogonality, the integral of $C$ is zero for the last three product measures and one for $\delta_0\otimes\delta_0$. Since $m_X(\{0\})=0$, the coefficient of this last measure is $\sigma(\{(0,0)\})$, as required.
\end{proof}

\subsection{F\o{}lner Averaging in $\mathbb{Q}^\times$}\label{F}

Let $p_1<p_2<\cdots$ be the primes. For $j\geq1$, define
\[
 F_j^+=\left\{\prod_{i=1}^j p_i^{a_i}:a_i\in\mathbb Z,\ -j\leq a_i\leq j\right\},
 \qquad F_j=F_j^+\cup(-F_j^+).
\]
Unique prime factorization gives $|F_j^+|=(2j+1)^j$ and $|F_j|=2(2j+1)^j$. We use these sets to average over the multiplicative group $\mathbb Q^\times$.

  \begin{lemma}\label{lem:Fj-Folner}
       The sequence $(F_j)_{j \ge 1}$ constitutes a F\o{}lner sequence in the multiplicative abelian group $\mathbb{Q}^\times$.
  \end{lemma}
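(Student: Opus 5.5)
We have to show that for every fixed $g\in\mathbb Q^\times$,
\[
   \frac{|gF_j\,\triangle\,F_j|}{|F_j|}\longrightarrow0 .
\]
Multiplication by $g$ permutes $\mathbb Q^\times$, so this is the usual F\o{}lner condition in the abelian group $\mathbb Q^\times$. The plan is to pass to exponent coordinates, where $F_j^+$ becomes a box and multiplication by $g$ becomes a translation.

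\textbf{Step 1: reduction to $F_j^+$.} Write $g=\varepsilon h$ with $\varepsilon\in\{\pm1\}$ and $h>0$. Since $F_j=F_j^+\cup(-F_j^+)$ is symmetric under negation, we have $gF_j=hF_j$. Also $hF_j=hF_j^+\cup(-hF_j^+)$, and this union is disjoint because the two pieces lie in the positive and negative rationals respectively. Hence
\[
   |gF_j\triangle F_j|=2\,|hF_j^+\triangle F_j^+|,
   \qquad |F_j|=2|F_j^+|,
\]
and it suffices to treat $h>0$ and $F_j^+$.

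\textbf{Step 2: exponent coordinates.} By unique factorization, $h=\prod_{i=1}^{r}p_i^{c_i}$ for some fixed $r$ and integers $c_i$. Put $c=\max_i|c_i|$. For $j\geq r$, the map
\[
   (a_1,\ldots,a_j)\longmapsto\prod_{i=1}^{j}p_i^{a_i}
\]
is a bijection from the box $B_j=\{-j,\ldots,j\}^j$ onto $F_j^+$. Under this bijection, multiplication by $h$ corresponds to translation by the vector $v=(c_1,\ldots,c_r,0,\ldots,0)$. Therefore
\[
   |hF_j^+\triangle F_j^+|=|(B_j+v)\triangle B_j| .
\]

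\textbf{Step 3: the box estimate.} A point of $B_j$ can leave $B_j$ after adding $v$ only if one of its first $r$ coordinates lies within distance $c$ of the boundary of $\{-j,\ldots,j\}$. For each such coordinate, the fraction of points of $B_j$ near the boundary is at most $c/(2j+1)$. A union bound over the $r$ coordinates gives
\[
   \frac{|(B_j+v)\setminus B_j|}{|B_j|}\leq\frac{rc}{2j+1}.
\]
The same bound holds for $B_j\setminus(B_j+v)$. Hence
\[
   \frac{|(B_j+v)\triangle B_j|}{|B_j|}\leq\frac{2rc}{2j+1}\longrightarrow0 .
\]

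The one point needing care is that the dimension $j$ of the box grows with $j$. The union bound above uses only the $r$ coordinates where $v$ is nonzero, and $r$ is fixed once $g$ is fixed, so the growing dimension causes no loss. Finally, the $F_j$ are finite and nonempty, and every $g$ is handled for all $j\geq r$. This completes the verification that $(F_j)$ is a F\o{}lner sequence in $\mathbb Q^\times$.
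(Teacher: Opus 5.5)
Your proof is correct and follows essentially the paper's argument. The sign is absorbed by the symmetry $F_j=-F_j$, and in exponent coordinates multiplication by $h$ becomes translation of the box $\{-j,\dots,j\}^j$ by a vector supported on the first $r$ coordinates. The only difference is cosmetic: the paper computes the overlap exactly as $\prod_{i}\bigl(1-|b_i|/(2j+1)\bigr)$ and then uses $1-\prod_i(1-u_i)\leq\sum_i u_i$. You instead bound the two one-sided differences directly by a union bound, which gives the slightly weaker but equally sufficient estimate $2rc/(2j+1)$.
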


\begin{proof}
Fix $s=\epsilon\prod_{i=1}^L p_i^{b_i}\in\mathbb Q^\times$, with $\epsilon\in\{1,-1\}$ and $b_i\in\mathbb Z$. For $j\geq\max\{L,|b_1|,\ldots,|b_L|\}$, the sign only permutes the two halves of $F_j$, while multiplication shifts the first $L$ prime exponents by $b_i$. Hence
\[
 \frac{|F_j\cap sF_j|}{|F_j|}
 =\prod_{i=1}^L\left(1-\frac{|b_i|}{2j+1}\right).
\]
Using $1-\prod_i(1-u_i)\leq\sum_i u_i$ for $0\leq u_i\leq1$, we obtain
\[
 \frac{|sF_j\triangle F_j|}{|F_j|}
 =2\left(1-\frac{|F_j\cap sF_j|}{|F_j|}\right)
 \leq\frac{2\sum_{i=1}^L|b_i|}{2j+1}\longrightarrow0.\qedhere
\]
\end{proof}

Define
\[
 \Phi_j(x,y)=\frac1{|F_j|}\sum_{r\in F_j}C(D_r(x,y)),
 \qquad \eta_j=-\min_{(x,y)\in X^2}\Phi_j(x,y).
\]
The minimum exists because $\Phi_j$ is continuous on the compact space $X^2$. Thus $\Phi_j\geq-\eta_j$ everywhere.

\begin{proposition}\label{prop:eta-to-zero}
    For every integer $j\ge 1$, one has $\eta_j \ge 0$, and the sequence of minimal values satisfies$$\lim_{j \to \infty} \eta_j = 0.$$
\end{proposition}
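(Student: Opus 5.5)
The argument has two parts: $\eta_j\ge 0$ by evaluating at a single point, and $\eta_j\to0$ by compactness together with the classification in Proposition~\ref{prop:joint-classification}.

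\emph{Nonnegativity.} Evaluate $\Phi_j$ at the identity $(0,0)\in X^2$. Each $D_r$ is a group automorphism, so $D_r(0,0)=(0,0)$. Hence $C(D_r(0,0))=\cos 0=1$ for every $r$, and $\Phi_j(0,0)=1$.

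This only gives $\min\Phi_j\le 1$, which is not what we need. We need a point where $\Phi_j\le 0$, and we find one by averaging over Haar measure. Each $D_r$ preserves $m_X\otimes m_X$ by Lemma~\ref{lem:rational-solenoid}(3). By character orthogonality, $\int C\,d(m_X\otimes m_X)=0$, and therefore
\[
\int_{X^2}\Phi_j\,d(m_X\otimes m_X)=0 .
\]
A continuous function with zero mean must attain a value $\le 0$. So $\min\Phi_j\le0$, which gives $\eta_j\ge0$.

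\emph{Convergence.} We argue by contradiction and extract a limit measure. Suppose $\limsup_j\eta_j>\varepsilon>0$. Pass to a subsequence $j_k$ and choose minimizers $z_k\in X^2$ with $\Phi_{j_k}(z_k)=-\eta_{j_k}<-\varepsilon$. Form the empirical measures
\[
\sigma_k=\frac1{|F_{j_k}|}\sum_{r\in F_{j_k}}\delta_{D_r z_k},
\qquad\text{so that}\qquad
\int C\,d\sigma_k=\Phi_{j_k}(z_k)<-\varepsilon .
\]
Since $X^2$ is compact and metrizable, $\operatorname{Prob}(X^2)$ is weak-$*$ compact, so after a further subsequence $\sigma_k\to\sigma$. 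Because $C$ is continuous, $\int C\,d\sigma\le-\varepsilon$.

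Next we check that $\sigma$ is $D_s$-invariant for every $s\in\mathbb Q^\times$. Since $D_sD_r=D_{sr}$, the measure $(D_s)_*\sigma_k$ is the uniform average over $sF_{j_k}$ of $\delta_{D_r z_k}$. Comparing with $\sigma_k$, the total variation of the difference is at most
\[
\frac{|sF_{j_k}\triangle F_{j_k}|}{|F_{j_k}|},
\]
which tends to $0$ by Lemma~\ref{lem:Fj-Folner}. Each $D_s$ is continuous, so $(D_s)_*$ is weak-$*$ continuous, and we conclude $(D_s)_*\sigma=\sigma$.

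Proposition~\ref{prop:joint-classification} then gives $\int C\,d\sigma=\sigma(\{(0,0)\})\ge0$, contradicting $\int C\,d\sigma\le-\varepsilon$. Hence $\eta_j\to0$.

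The main obstacle was already handled in Proposition~\ref{prop:joint-classification}. What remains to check here is only that the F\o lner averages along arbitrary, $j$-dependent base points $z_k$ still produce invariant limits. This holds because the F\o lner estimate is uniform in the base point: it bounds the total variation independently of $z_k$.
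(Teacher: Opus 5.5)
Your proof is correct and follows the paper's argument essentially step for step. It gets $\eta_j\ge0$ from the vanishing Haar integral of $\Phi_j$, then derives a contradiction from the empirical measures along $D_r z_k$, their weak-$*$ limit, invariance via the base-point-uniform F\o{}lner estimate, and Proposition~\ref{prop:joint-classification}; the opening evaluation $\Phi_j(0,0)=1$ is superfluous, as you note, and can simply be deleted.
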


\begin{proof}
Since each $D_r$ preserves $m_X\otimes m_X$ and $\int C\,d(m_X\otimes m_X)=0$, we have $\int\Phi_j\,d(m_X\otimes m_X)=0$. Consequently, $\min\Phi_j\leq0$ and $\eta_j\geq0$.

Suppose that $\eta_j\not\to0$. Choose $\epsilon>0$, indices $j_t\to\infty$, and points $z_t\in X^2$ with $\Phi_{j_t}(z_t)\leq-\epsilon$. Define
\[
 \sigma_t=\frac1{|F_{j_t}|}\sum_{r\in F_{j_t}}\delta_{D_rz_t}.
\]
After taking a subsequence, weak-* compactness of $\Prob(X^2)$ gives $\sigma_t\to\sigma$. For every fixed $s\in\mathbb Q^\times$ and $f\in C(X^2)$,
\[
 \left|\int f\circ D_s\,d\sigma_t-\int f\,d\sigma_t\right|
 \leq\|f\|_\infty\frac{|sF_{j_t}\triangle F_{j_t}|}{|F_{j_t}|}
 \longrightarrow0.
\]
The estimate is independent of $z_t$, which may vary with $t$. Passing to the limit shows that $\sigma$ is invariant under every $D_s$. Proposition~\ref{prop:joint-classification} then gives $\int C\,d\sigma\geq0$, whereas weak-* convergence gives
\[
 \int C\,d\sigma=\lim_{t\to\infty}\Phi_{j_t}(z_t)\leq-\epsilon.
\]
This contradiction proves $\eta_j\to0$.
\end{proof}

\subsection{Integer Hyperbola Measures}
We now clear the denominators in the finite averages to obtain measures supported on integer hyperbolas.

For any integer $j \ge 1$, we recall that $p_1 < p_2 < \dots < p_j$ are the first $j$ prime numbers. We define their product by$$P_j = \prod_{i=1}^j p_i.$$

Using this product, we define the base scaling factor $A_j$ and the hyperbola constant $K_j$ by setting
  $$A_j = P_j^j, \quad \text{and} \quad K_j = A_j^2 = P_j^{2j}.$$
We also define the combinatorial count $N_j = (2j+1)^j$.

Every positive divisor of $K_j=\prod_{i=1}^j p_i^{2j}$ is uniquely of the form $\prod_{i=1}^j p_i^{c_i}$ with $0\leq c_i\leq2j$. Thus $K_j$ has exactly $N_j=(2j+1)^j$ positive divisors.

Place equal mass on the two branches of the integer hyperbola by defining
\begin{equation}\label{discrete}
    \nu_j = \frac{1}{2N_j} \sum_{d\vert{}K_j, d>0} \left( \delta_{(d, K_j/d)} + \delta_{(-d, -K_j/d)} \right).
\end{equation}

Every integer solution of $ab=K_j>0$ is either $(d,K_j/d)$ or $(-d,-K_j/d)$ for a positive divisor $d$ of $K_j$. Therefore,
\[
 \supp\nu_j=H_{K_j}:=\{(a,b)\in\mathbb Z^2:ab=K_j\}.
\]

  For any finitely supported measure $\nu$ on $\mathbb{Z}^2$, we adopt the standard convention for the discrete Fourier transform:$$\widehat{\nu}(\theta_1, \theta_2) = \sum_{(a,b) \in \mathbb{Z}^2} \nu(a,b) \exp(2\pi i (a\theta_1 + b\theta_2)), \quad \text{for } (\theta_1, \theta_2) \in \mathbb{T}^2.$$

The next proposition identifies the Fourier polynomial with the solenoidal average, including equality of their minima.
  
  \begin{proposition}\label{prop:hyperbola-transform}
    The transform $\widehat\nu_j$ is real-valued. For all $x,y\in X$,
\begin{equation}\label{eq:average-fourier-identity}
 \Phi_j(x,y)=\widehat\nu_j\bigl(x(1/A_j),y(1/A_j)\bigr).
\end{equation}
Moreover,
\begin{equation}\label{eq:exact-spectral-minimum}
 \eta_j=-\min_{\theta\in\mathbb T^2}\widehat\nu_j(\theta).
\end{equation}
In particular, $\widehat\nu_j\geq-\eta_j$ on $\mathbb T^2$, with $\eta_j\geq0$ and $\eta_j\to0$.
  \end{proposition}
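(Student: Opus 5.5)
The plan is to compute $\Phi_j$ directly and match it term by term with $\widehat\nu_j$. Reality of $\widehat\nu_j$ is immediate, because $\nu_j$ is symmetric under $(a,b)\mapsto(-a,-b)$. Pairing $\delta_{(d,K_j/d)}$ with $\delta_{(-d,-K_j/d)}$ gives
\[
\widehat\nu_j(\theta_1,\theta_2)=\frac1{N_j}\sum_{d\mid K_j,\,d>0}\cos\bigl(2\pi(d\theta_1+(K_j/d)\theta_2)\bigr).
\]

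\textbf{Step 1: expand $\Phi_j$.} For $r\in F_j$ we have $C(D_r(x,y))=\Ree\bigl(e_1(M_rx)e_1(M_{1/r}y)\bigr)=\cos\bigl(2\pi(x(r)+y(1/r))\bigr)$. Replacing $r$ by $-r$ changes the argument to its negative, so the cosine is unchanged. Hence $\Phi_j(x,y)=\frac1{N_j}\sum_{r\in F_j^+}\cos(2\pi(x(r)+y(1/r)))$.

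\textbf{Step 2: reindex.} For $r=\prod p_i^{a_i}\in F_j^+$ with $|a_i|\le j$, set $d=A_jr=\prod p_i^{a_i+j}$. This is a bijection from $F_j^+$ onto the positive divisors of $K_j$, and it satisfies $K_j/d=A_j/r$. Since $x$ is a homomorphism, $x(r)=x(d/A_j)=d\,x(1/A_j)$, and likewise $y(1/r)=y\bigl((K_j/d)/A_j\bigr)=(K_j/d)\,y(1/A_j)$, both taken in $\mathbb T$. Substituting these into Step 1 gives \eqref{eq:average-fourier-identity}.

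\textbf{Step 3: the minimum identity \eqref{eq:exact-spectral-minimum}.} This is the only step that is not bookkeeping. From Step 2, the minimum of $\Phi_j$ is at least the minimum of $\widehat\nu_j$. For the reverse inequality, I need the evaluation map $X\to\mathbb T$, $x\mapsto x(1/A_j)$, to be surjective. I will use the characters $x_t(q)=tq\bmod 1$ from the proof of Lemma~\ref{lem:rational-solenoid}. Given $\theta\in\mathbb T$, choose $t=A_j\theta$ (lifting $\theta$ to $\mathbb R$); then $x_t(1/A_j)=\theta$. Applying this in each coordinate, every $(\theta_1,\theta_2)\in\mathbb T^2$ is attained, so $\Phi_j$ and $\widehat\nu_j$ have the same range and in particular the same minimum. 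The minimum of $\widehat\nu_j$ exists because it is a continuous function on $\mathbb T^2$.

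The final claims then follow. Since the minima agree, $\widehat\nu_j\ge-\eta_j$ on $\mathbb T^2$, and $\eta_j\ge0$ with $\eta_j\to0$ by Proposition~\ref{prop:eta-to-zero}.
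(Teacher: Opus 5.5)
Your proposal is correct and follows the paper's proof essentially step for step. It uses the same reindexing $d=A_jr$ from $F_j^+$ to the positive divisors of $K_j$, the same additivity computation $x(r)=d\,x(1/A_j)$ and $y(1/r)=(K_j/d)\,y(1/A_j)$, the same pairing of $r$ with $-r$, and the same surjectivity of $x\mapsto x(1/A_j)$ via $x(q)=A_jtq \bmod 1$ to identify the minima.
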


     \begin{proof}
The map $r\mapsto d=A_jr$ is a bijection from $F_j^+$ to the positive divisors of $K_j$: it changes each prime exponent from $a_i\in[-j,j]$ to $j+a_i\in[0,2j]$. Under this bijection,
\[
 r=\frac d{A_j},\qquad \frac1r=\frac{K_j/d}{A_j}.
\]
For $x,y\in X$, set $\theta_1=x(1/A_j)$ and $\theta_2=y(1/A_j)$ in $\mathbb T$. Additivity gives $x(r)=d\theta_1$ and $y(1/r)=(K_j/d)\theta_2$. The contributions of $r$ and $-r$ to the cosine average agree. Hence
\[
 \Phi_j(x,y)=\frac1{N_j}\sum_{\substack{d\mid K_j\\d>0}}
 \cos\!\left(2\pi\left(d\theta_1+\frac{K_j}{d}\theta_2\right)\right)
 =\widehat\nu_j(\theta_1,\theta_2),
\]
where the last equality follows from the symmetry of \eqref{discrete}.

The evaluation map $X\to\mathbb T$, $x\mapsto x(1/A_j)$, is onto: given a real representative $t$ of $\theta\in\mathbb T$, the homomorphism $x(q)=A_jtq\pmod1$ has $x(1/A_j)=\theta$. Therefore the pair of evaluations ranges over all of $\mathbb T^2$, proving \eqref{eq:exact-spectral-minimum}. Proposition~\ref{prop:eta-to-zero} supplies the remaining conclusions.
\end{proof}

 \subsection{A Fourier Ratio Bound for Upper Banach Density}

We use the following Hoffman-type spectral ratio bound. It is a finite-support version of the operator bounds for independence ratios; see, for example, \cite{BachocEtAl2014} for a general operator framework. We give the finite-quotient and periodization argument needed here.

 \begin{proposition}\label{prop:ratio bound}
      Let $\nu$ be a finitely supported, symmetric probability measure on $\mathbb{Z}^2$. Suppose that the origin does not lie in its support, meaning $0 \notin S := \text{supp } \nu$. Assume that its Fourier transform satisfies a uniform lower bound
  $$\widehat{\nu}(\theta) \ge -\eta, \quad \text{for all } \theta \in \mathbb{T}^2,$$
for some constant $\eta \ge 0$. If a subset $E \subseteq \mathbb{Z}^2$ satisfies
  $$(E-E) \cap S = \emptyset,$$
then its upper Banach density is bounded strictly by
  $$d^\star(E) \le \frac{\eta}{1+\eta}.$$  
 \end{proposition}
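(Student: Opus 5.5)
Assume $d^\star(E)=\delta>0$; the goal is $\delta\le\eta/(1+\eta)$. The plan is to transfer the problem to a finite torus $\mathbb Z_N^2$, where the standard Hoffman argument with Fourier coefficients applies, and then to let the period tend to infinity.

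\textbf{Step 1: finite window.} Let $R$ be such that $S\subseteq[-R,R]^2$. Fix a large $L$ and put $N=L+2R+1$. By Lemma~\ref{lem:density-facts}, some translate of $Q_L$ contains at least $\delta L^2$ points of $E$. Translating $E$ does not change $E-E$, so we may take $B=E\cap Q_L$ with $|B|\ge\delta L^2$. Regard $B$ as a subset of $\mathbb Z_N^2$ via reduction mod $N$.

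\textbf{Step 2: no wrap-around.} Suppose $b-b'\equiv s\pmod N$ with $b,b'\in B$ and $s\in S$. Both $b-b'$ and $s$ have coordinates of absolute value at most $L-1+R<N$. Hence each coordinate of $b-b'-s$ is a multiple of $N$ of absolute value less than $L+R+... $; the choice of $N$ forces it to be $0$. Then $b-b'=s\in (E-E)\cap S$, which contradicts the hypothesis. So $B-B$ avoids the image $\bar S$ of $S$ in $\mathbb Z_N^2$. The same bound shows that $S\to\bar S$ is injective and that $0\notin\bar S$. Let $\bar\nu$ be the pushforward of $\nu$. It is symmetric, and for each $\xi\in\mathbb Z_N^2$ its Fourier coefficient is $\widehat{\bar\nu}(\xi)=\widehat\nu(\xi/N)\ge-\eta$.

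\textbf{Step 3: Hoffman bound on $\mathbb Z_N^2$.} Let $f=1_B$ and consider
\[
I=\sum_{s}\bar\nu(s)\sum_{z}f(z)f(z+s).
\]
Since $B-B$ misses $\operatorname{supp}\bar\nu$, we have $I=0$. By Parseval, $I=N^{-2}\sum_\xi|\widehat f(\xi)|^2\widehat{\bar\nu}(\xi)$, up to the sign convention, which is harmless because $\bar\nu$ is symmetric and $\widehat{\bar\nu}$ is real. Separate the term $\xi=0$, which equals $|B|^2N^{-2}$ since $\widehat{\bar\nu}(0)=1$. For the remaining terms use $\widehat{\bar\nu}\ge-\eta$ together with $\sum_{\xi\ne0}|\widehat f(\xi)|^2=N^2|B|-|B|^2$. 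This gives
\[
0\ge |B|^2N^{-2}-\eta\bigl(|B|-|B|^2N^{-2}\bigr),
\]
which rearranges to $|B|/N^2\le \eta/(1+\eta)$.

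\textbf{Step 4: limit.} From $\delta L^2\le|B|\le\frac{\eta}{1+\eta}(L+2R+1)^2$, letting $L\to\infty$ gives $\delta\le\eta/(1+\eta)$.

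The main obstacle is Step 2: one has to choose the period $N$ carefully, so that passing to the quotient neither creates spurious differences in $\bar S$ nor puts $0$ in $\bar S$. The room reserved by $N\ge L+2R+1$ is what handles this. The Parseval computation in Step 3 is routine once the sign conventions are fixed.
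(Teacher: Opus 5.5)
Your proposal is correct and follows essentially the same route as the paper: take a dense window $B\subseteq Q_L$, place it in a torus of side $L+2R+1$ so that no wrap-around differences land in $S$, run the Hoffman bound with the eigenvalues $\widehat\nu(\xi/N)\ge-\eta$ (your Parseval computation is the paper's eigenbasis expansion in different words), and let $L\to\infty$. The garbled bound in your Step 2 should simply read: each coordinate of $b-b'-s$ has absolute value at most $(L-1)+R<N$, so, being a multiple of $N$, it must be $0$.
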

\begin{proof}
If $d^\star(E)=0$, the conclusion is immediate. We therefore assume $d^\star(E)>0$.

    We proceed in two stages: first, we establish the bound for sets that are periodic, and then we utilize a geometric ``padding'' argument to transfer this bound to arbitrary sets with positive upper Banach density.

\noindent\textbf{Step $1$: The Periodic Case.}
Suppose initially that the set $E$ is periodic modulo some lattice $q\mathbb{Z}^2$. We can then project our analysis down to the finite quotient group $G_q = (\mathbb{Z}/q\mathbb{Z})^2$. Let $f : G_q \to \{0,1\}$ denote the indicator function of $E$ on this finite torus. We equip the function space $L^2(G_q)$ with the normalized counting measure, so the global density of $E$ is given by the expectation
  $$\rho = \frac{1}{q^2} \sum_{x \in G_q} f(x).$$
We define the discrete convolution operator $T_\nu : L^2(G_q) \to L^2(G_q)$ by taking the weighted average of $f$ translated by the elements of $S$:
  $$(T_\nu f)(x) = \sum_{s \in S} \nu(s) f(x+s).$$

Because $E$ contains  no pairs of points separated by a vector in $S$, the product $f(x)f(x+s)$ must be exactly $0$ for every $s \in S$ and every $x \in G_q$. Consequently, the standard $L^2$ inner product vanishes entirely:
  $$\langle f, T_\nu f \rangle = \frac{1}{q^2} \sum_{x \in G_q} f(x) (T_\nu f)(x) = 0.$$

  To exploit this orthogonality, we apply the spectral decomposition of $T_\nu$. The standard exponential characters of $G_q$ form an orthonormal eigenbasis for $T_\nu$. By direct computation, the eigenvalue associated with a frequency $\xi \in G_q$ is exactly the discrete Fourier transform evaluated at the rational frequency, $\widehat{\nu}(\xi/q)$. Because $\nu$ is a symmetric measure, $T_\nu$ is a self-adjoint operator, guaranteeing purely real eigenvalues. The trivial character (the constant function $\mathbf{1}$) corresponds to the eigenvalue $\widehat{\nu}(0) = 1$, since $\nu$ is a probability measure. Furthermore, by our primary hypothesis, all eigenvalues of $T_\nu$ are bounded below by $-\eta$.  We now decompose our indicator function $f$ into its constant expectation and an orthogonal mean-zero fluctuation: $f = \rho \mathbf{1} + g$, where $\int g = 0$. Expanding the vanishing inner product yields:
  $$0 = \langle f, T_\nu f \rangle = \langle \rho \mathbf{1} + g, T_\nu (\rho \mathbf{1} + g) \rangle = \rho^2 \langle \mathbf{1}, T_\nu \mathbf{1} \rangle + \langle g, T_\nu g \rangle = \rho^2 + \langle g, T_\nu g \rangle.$$

Since $g$ is a linear combination of non-trivial characters, and every non-trivial eigenvalue is at least $-\eta$, we can bound the quadratic form $\langle g, T_\nu g \rangle$ from below by $-\eta \Vert{}g\Vert{}_2^2$. This gives:
\begin{equation}\label{proof:ineq}
    0 \ge \rho^2 - \eta \Vert{}g\Vert{}_2^2.
\end{equation}

Because $f$ is an indicator function taking values in $\{0,1\}$, its $L^2$ norm satisfies $\Vert{}f\Vert{}_2^2 = \rho$. By Pythagoras' theorem on orthogonal components,
\begin{equation}\label{proof:ortho}
    \Vert{}g\Vert{}_2^2 = \Vert{}f\Vert{}_2^2 - \Vert{}\rho \mathbf{1}\Vert{}_2^2 = \rho - \rho^2 = \rho(1-\rho).
\end{equation}
From inequalities \eqref{proof:ineq} and \eqref{proof:ortho} we have
  $$0 \ge \rho^2 - \eta\rho(1-\rho).$$
Since $\rho=d^\star(E)>0$ in the periodic case under consideration, division by $\rho$ gives $\rho\leq\eta/(1+\eta)$.

\noindent\textbf{Step $2$: The General Case.} Now, let $E$ be an arbitrary set in $\mathbb{Z}^2$. We must extract a dense periodic approximation. Define the maximum radius of the support of $\nu$ in the uniform norm:$$R = \max_{s \in S} \Vert{}s\Vert{}_\infty.$$
Fix an arbitrary real density threshold $0 < \gamma < d^\star(E)$. By the definition of the upper Banach density, there exist arbitrarily large odd integers $L$ and corresponding base translations $z \in \mathbb{Z}^2$ such that the intersection of $E$ with the square $z + \{0, \dots, L-1\}^2$ has cardinality strictly greater than $\gamma L^2$.  Translating this dense local intersection back to the standard square origin gives a finite block of elements:$$B \subseteq \{0, \dots, L-1\}^2 \quad \text{with} \quad \vert{}B\vert{} > \gamma L^2.$$

Because $B$ is merely a shifted subset of $E$, it naturally inherits the property that $(B-B) \cap S = \emptyset$. We now periodize this finite block. To prevent differences between distinct period cells from falling into $S$, we introduce a sufficiently wide geometric padding. Set the new period length to
  $$q = L + 2R + 1,$$
and define the fully periodic set
  $$E' = B + q\mathbb{Z}^2.$$
We must verify that $(E'-E') \cap S = \emptyset$. Consider a difference $u - v$ where $u, v \in E'$. If $u$ and $v$ belong to the exact same period cell, their difference belongs to $B-B$, which we already know is disjoint from $S$. If $u$ and $v$ belong to distinct period cells, then the absolute value of at least one of their coordinates must span the gap between the blocks. Since each block $B$ is confined to a square of length $L-1$, the distance between points in adjacent cells is bounded below by the period minus the maximum internal block width:
  $$q - (L-1) = (L + 2R + 1) - L + 1 = 2R + 2 > R.$$
However, every element of $S$ has a supremum norm at most $R$. Thus, cross-cell differences are strictly too large to belong to $S$, guaranteeing that $(E'-E') \cap S = \emptyset$.  Since $E'$ is periodic modulo $q\mathbb{Z}^2$, we can apply the result of Stage 1 to conclude that its density over the period cell is bounded by the spectral ratio:$$\frac{\vert{}B\vert{}}{q^2} \le \frac{\eta}{1+\eta}.$$
Using our lower bound $\vert{}B\vert{} > \gamma L^2$, we obtain:
  $$\gamma \left( \frac{L}{q} \right)^2 < \frac{\eta}{1+\eta}.$$

Recall that this holds for arbitrarily large values of $L$. Since $q = L + 2R + 1$ and $R$ is a fixed constant depending only on the measure $\nu$, the geometric ratio $L/q \to 1$ as $L \to \infty$. Taking this limit yields $\gamma \le \frac{\eta}{1+\eta}$. Finally, because this inequality holds for any $\gamma$ strictly less than $d^\star(E)$, we may take the limit supremum as $\gamma \uparrow d^\star(E)$ to conclude $d^\star(E) \le \frac{\eta}{1+\eta}$, completing the proof. 
\end{proof}

\subsection{Stretching the Measure and Proving the Main Theorem}

We have a bound of $\eta_j/(1+\eta_j)$ on the density of sets whose differences avoid $ab=K_j$. To obtain the whole ideal, we push the measure forward to $ab=nK_j$, keeping the same Fourier lower bound for every $n\neq0$.

For $n\in\mathbb Z\setminus\{0\}$, define
\[
 L_n:\mathbb Z^2\to\mathbb Z^2,\qquad L_n(a,b)=(na,b).
\]

\begin{lemma}\label{str}
    Let $\nu$ be a finitely supported, symmetric probability measure on $\mathbb{Z}^2$ whose discrete Fourier transform satisfies the uniform lower bound$$\widehat{\nu}(\theta_1, \theta_2) \ge -\eta, \quad \text{for all } (\theta_1, \theta_2) \in \mathbb{T}^2.$$
Define the pushforward measure $\nu^{(n)} = (L_n)_* \nu$. Then $\nu^{(n)}$ is also a finitely supported, symmetric probability measure, and its Fourier transform satisfies the exact same spectral bound:
  $$\widehat{\nu^{(n)}}(\theta_1, \theta_2) = \widehat{\nu}(n\theta_1, \theta_2) \ge -\eta.$$
Furthermore, if every vector in the support of $\nu$ has a coordinate product equal to some non-zero constant $K \neq 0$, then every vector in the support of $\nu^{(n)}$ has a coordinate product equal to $nK$. 
\end{lemma}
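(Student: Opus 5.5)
The lemma is a direct computation with the definition of pushforward and the discrete Fourier transform. I will verify the three claims in order: the structural properties of $\nu^{(n)}$, the Fourier identity, and the support condition.

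\textbf{Structural properties.} By definition, $\nu^{(n)}(B)=\nu(L_n^{-1}B)$ for $B\subseteq\mathbb Z^2$. Its total mass equals that of $\nu$, so it is a probability measure. Its support is contained in $L_n(\supp\nu)$; in fact, since $n\neq0$ makes $L_n$ injective, the support equals this image, which is finite. For symmetry, $L_n$ is linear, so $L_n(-v)=-L_n(v)$. Hence
\[
\nu^{(n)}(-w)=\sum_{v:\,L_nv=-w}\nu(v)=\sum_{v':\,L_nv'=w}\nu(-v')=\nu^{(n)}(w),
\]
using the symmetry of $\nu$ in the last step.

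\textbf{Fourier identity.} Next I compute
\[
\widehat{\nu^{(n)}}(\theta_1,\theta_2)=\sum_{(a,b)}\nu(a,b)\exp\bigl(2\pi i(na\theta_1+b\theta_2)\bigr)=\widehat\nu(n\theta_1,\theta_2).
\]
This evaluation is well defined on $\mathbb T^2$ because $n\theta_1$ is well defined modulo $1$ for an integer $n$. Since $(n\theta_1,\theta_2)$ is again a point of $\mathbb T^2$, the hypothesis gives $\widehat{\nu^{(n)}}\geq-\eta$ everywhere. The transform is real by symmetry, so the inequality is meaningful.

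\textbf{Support condition.} Finally, each point of $\supp\nu^{(n)}$ has the form $(na,b)$ with $(a,b)\in\supp\nu$ and $ab=K$. Its coordinate product is therefore $nab=nK$.

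None of the steps is a real obstacle. The only points requiring care are that injectivity of $L_n$ is what guarantees the support is exactly the image, and that the uniform bound is preserved because the map $\theta_1\mapsto n\theta_1$ only relabels points of the torus.
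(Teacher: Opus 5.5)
Your proposal is correct and matches the paper's proof essentially step for step: mass and finite support are preserved, symmetry follows from $L_n(-v)=-L_n(v)$, the Fourier identity is a direct substitution, and the support claim is $(na)b=nK$. One minor correction to your closing remark: injectivity of $L_n$ is not what gives $\supp\nu^{(n)}=L_n(\supp\nu)$, since nonnegativity of $\nu$ already rules out cancellation in the pushforward, and in any case only the inclusion $\supp\nu^{(n)}\subseteq L_n(\supp\nu)$ is used.
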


\begin{proof}
The pushforward preserves total mass and finite support. Since $L_n(-v)=-L_n(v)$, it also preserves symmetry. Directly from the definition,
\begin{align*}
 \widehat{\nu^{(n)}}(\theta_1,\theta_2)
 &=\sum_{(a,b)\in\mathbb Z^2}\nu(a,b)
       \exp\bigl(2\pi i(na\theta_1+b\theta_2)\bigr)\\
 &=\widehat\nu(n\theta_1,\theta_2)\geq-\eta.
\end{align*}
If $ab=K$ on $\supp\nu$, then $(na)b=nK$ on the pushforward support.
\end{proof}

\begin{proof}[Proof of Theorem~\ref{thm:main}]
For $\delta>1$ there are no sets satisfying the density assumption, so take $K(\delta)=1$. Assume $0<\delta\leq1$. By Proposition~\ref{prop:eta-to-zero}, choose $j=j(\delta)$ such that
\begin{equation}\label{proofneed}
 \frac{\eta_j}{1+\eta_j}<\delta,
\end{equation}
and put
\[
 K(\delta)=K_j=\left(\prod_{i=1}^j p_i\right)^{2j}.
\]
Let $E\subseteq\mathbb Z^2$ satisfy $d^\star(E)\geq\delta$, and fix any nonzero integer $n$. By Proposition~\ref{prop:hyperbola-transform} and Lemma~\ref{str}, the probability measure $\nu_j^{(n)}$ is symmetric, finitely supported, has Fourier transform at least $-\eta_j$, and is supported on $ab=nK(\delta)$. In particular, its support does not contain the origin.

If $nK(\delta)\notin P(E)$, then $(E-E)\cap\supp\nu_j^{(n)}=\varnothing$. Proposition~\ref{prop:ratio bound} and \eqref{proofneed} give
\[
 d^\star(E)\leq\frac{\eta_j}{1+\eta_j}<\delta,
\]
a contradiction. Thus every nonzero multiple of $K(\delta)$ belongs to $P(E)$. Finally, $E\neq\varnothing$, so $(0,0)\in E-E$ and $0\in P(E)$. The chosen $j$, and hence $K(\delta)$, depends only on $\delta$ and not on $E$ or $n$.
\end{proof}

Taking $\delta=d^\star(E)$ gives the density-dependent formulation
asked for in Fish's original problem.

\section{Applications}
We first deduce the square-difference corollaries from the integer theorem, then treat finite cyclic rings, and finally prove the transfer needed for the rational consequences.

\begin{proof}[Proof of Corollary~\ref{cor2}]
Let $T(x,y)=(x-y,x+y)$ and $E'=T(E)$. Since $\det T=2$,
Lemma~\ref{lem:density-facts} gives
\[
 d^\star(E')=\frac12d^\star(E)\geq\frac{\delta}{2}.
\]
Apply Theorem~\ref{thm:main} to $E'$ and set
\[
 K'(\delta)=K(\delta/2).
\]
Linearity of $T$ gives $E'-E'=T(E-E)$, and hence
\[
 P(E')
 =\{(x-y)(x+y):(x,y)\in E-E\}
 =\{x^2-y^2:(x,y)\in E-E\}.
\]
The ideal containment for $P(E')$ proves the claim.
\end{proof}

\begin{proof}[Proof of Corollary~\ref{cor1}]
Let
\[
 E=E_1\times E_2.
\]
By \eqref{eq:density-product},
\[
 d^\star(E)=\alpha\beta,
\]
and
\[
 E-E=(E_1-E_1)\times(E_2-E_2).
\]
Applying Corollary~\ref{cor2} to $E$ with density threshold
$\alpha\beta$ gives
\[
 K'(\alpha\beta)\mathbb Z
 \subseteq
 \{x^2-y^2:
     x\in E_1-E_1,\ y\in E_2-E_2\}.
\]
Since
\[
 K'(\alpha\beta)=K(\alpha\beta/2),
\]
the conclusion follows by taking
\[
 K(\alpha,\beta)=K(\alpha\beta/2).
\]
\end{proof}

\begin{proof}[Proof of Corollary~\ref{corfirst:abelian}]
Let $\pi_N:\mathbb Z^2\to\mathbb Z_N^2$ be reduction modulo $N$ and set $\widetilde E=\pi_N^{-1}(E)$. This is a periodic set with
\[
 d^\star(\widetilde E)=\frac{|E|}{N^2}\geq\delta.
\]
Theorem~\ref{thm:main} gives $K(\delta)\mathbb Z\subseteq P(\widetilde E)$. As reduction modulo $N$ is a ring homomorphism and $\pi_N(\widetilde E-\widetilde E)=E-E$, reduction of $P(\widetilde E)$ is exactly
\[
 \{uv\pmod N:(u,v)\in E-E\}.
\]
The image of $K(\delta)\mathbb Z$ in $\mathbb Z_N$ is $d\mathbb Z_N$, where $d=\gcd(K(\delta),N)$. Thus $d\mid N$, $d\leq K(\delta)$, and the required containment follows.
\end{proof}

\begin{proof}[Proofs of Corollaries~\ref{corsecond:abelian} and~\ref{corthird:abelian}]
For Corollary~\ref{corsecond:abelian}, let $\widetilde E_i\subseteq\mathbb Z$ be the inverse image of $E_i$ under reduction modulo $N$. Their densities are $|E_i|/N$, so the product $\widetilde E_1\times\widetilde E_2$ has density at least $\alpha\beta$. Applying Corollary~\ref{cor2} to this product yields the integer square-difference containment with modulus $k=K'(\alpha\beta)=K(\alpha\beta/2)$. Reducing modulo $N$ gives the claim with $d=\gcd(k,N)\leq k$.

For Corollary~\ref{corthird:abelian}, lift $E\subseteq\mathbb Z_N^2$ to $\widetilde E=\pi_N^{-1}(E)$ and apply Corollary~\ref{cor2} with threshold $\delta$. The polynomial $x^2-y^2$ commutes with reduction modulo $N$, so projection gives the required ideal with $d=\gcd(K'(\delta),N)\leq K'(\delta)$.
\end{proof}

For the rational consequences, we first prove the finite-window transfer that is needed before applying rational stretching.
\begin{lemma}\label{lem:rational-transfer}
Let $0<\delta\leq1$, and let $K_{\mathbb Z}(\cdot)$ denote a choice of moduli in Theorem~\ref{thm:main}. Put $K=K_{\mathbb Z}(\delta/4)$. Every set $E\subseteq\mathbb Q^2$ with $d^\star(E)\geq\delta$ satisfies $K\in P(E)$.
\end{lemma}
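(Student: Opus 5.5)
The plan is to reduce Lemma~\ref{lem:rational-transfer} to Theorem~\ref{thm:main}: from the $\mathbb Q^2$-dense set $E$, build a single set $E''\subseteq\mathbb Z^2$ of upper Banach density at least $\delta/4$ such that every difference of $E''$ that could lie on the hyperbola $H_K$ is a genuine difference of $E$. Then Theorem~\ref{thm:main} gives $K\in P(E'')$, and this will transfer back to $K\in P(E)$.

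\emph{Step 1: dense integer windows.} For each $L\geq1$ consider the finite set $Q_L=\{0,\dots,L-1\}^2\subseteq\mathbb Z^2\subseteq\mathbb Q^2$. The finite-window assertion of Lemma~\ref{lem:density-facts}, applied in $G=\mathbb Q^2$, gives $x_L\in\mathbb Q^2$ with
\[
|E\cap(x_L+Q_L)|\geq\delta L^2 .
\]
Put $B_L=(E-x_L)\cap Q_L\subseteq\mathbb Z^2$. Then $|B_L|\geq\delta L^2$ and $B_L-B_L\subseteq E-E$. No rescaling by denominators is needed, since we work with integer windows inside $\mathbb Q^2$ from the start.

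\emph{Step 2: gluing the blocks.} Choose side lengths $L_k\to\infty$ and translation vectors $w_k\in\mathbb Z^2$. The $w_k$ should be spread so far apart that any two distinct blocks $w_k+Q_{L_k}$ and $w_l+Q_{L_l}$ lie at sup-norm distance greater than $|K|$. Set
\[
E''=\bigcup_k (w_k+B_{L_k}).
\]
Every point of $H_K$ has both coordinates bounded by $|K|$ in absolute value. Hence a difference in $(E''-E'')\cap H_K$ must come from two points of the same block. It therefore lies in $B_{L_k}-B_{L_k}\subseteq E-E$.

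\emph{Step 3: density of $E''$.} Fix $N$ and take $k$ with $L_k\gg N$. Tile $Q_{L_k}$ by translates of $Q_N$, leaving a remainder of size $O(NL_k)$, as in the proof of Lemma~\ref{lem:density-equivalence}. Averaging shows that some tile contains at least $(\delta-O(N/L_k))N^2$ points of $w_k+B_{L_k}$. Letting $k\to\infty$ and then $N\to\infty$ gives $d^\star(E'')\geq\delta\geq\delta/4$.

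\emph{Conclusion and main obstacle.} Since $K$ is the modulus supplied by Theorem~\ref{thm:main} for the threshold $\delta/4$, we get $K\in P(E'')$. By Step 2, the corresponding difference lies in $E-E$, so $K\in P(E)$. The step needing most care is the density bookkeeping in Step 3. One has to check that the upper Banach density on $\mathbb Q^2$ (defined via Følner sequences) really yields the square-window lower bound used in Step 1. This is exactly what the infimum formula in Lemma~\ref{lem:density-facts} provides, and the factor $1/4$ leaves ample slack for any normalization losses.
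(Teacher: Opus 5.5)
Your argument is correct. It shares its key ingredients with the paper's proof: the finite-window assertion of Lemma~\ref{lem:density-facts} applied in $\mathbb Q^2$ to produce integer windows $B_L\subseteq Q_L$ with $B_L-B_L\subseteq E-E$, and the bound $\|v\|_\infty\leq K$ on $H_K$, which lets a gap wider than $K$ keep cross-block differences off the hyperbola.

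Where you differ is in how the windows become one dense subset of $\mathbb Z^2$. The paper fixes a single window and periodizes it with padding, $\widetilde B_L=B_L+q_L\mathbb Z^2$ with $q_L=L+2K+1$. The density of this periodic set is exactly $|B_L|/q_L^2\geq\delta(L/q_L)^2$, and the paper then argues by contradiction from $K\notin P(E)$. This uses only one window and mirrors the padding step in the proof of Proposition~\ref{prop:ratio bound}, at the cost of a factor $(L/q_L)^2\to1$.

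You instead concatenate infinitely many growing windows $w_k+B_{L_k}$, placed at mutual sup-norm distance greater than $K$. This needs the tiling estimate of your Step~3, which does hold: some translate of $Q_N$ inside $w_k+Q_{L_k}$ contains at least $\delta N^2-2N^3/L_k$ points of $w_k+B_{L_k}$. In exchange there is no density loss, and you get $d^\star(E'')\geq\delta$. Your route therefore shows that the $\delta/4$ in the statement is pure slack: $K=K_{\mathbb Z}(\delta)$ would already suffice. The paper's periodization, for comparison, permits any threshold strictly below $\delta$.
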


\begin{proof}
By the finite-window assertion of Lemma~\ref{lem:density-facts}, for each $L\geq1$ there is $z_L\in\mathbb Q^2$ such that
\[
 B_L=(E-z_L)\cap Q_L\subseteq\mathbb Z^2,
 \qquad |B_L|\geq\delta L^2.
\]
Suppose that $K\notin P(E)$ and let
\[
 H_K=\{(a,b)\in\mathbb Z^2:ab=K\}.
\]
Then $(B_L-B_L)\cap H_K=\varnothing$, since $B_L-B_L\subseteq E-E$. As $K\geq1$, every $v\in H_K$ satisfies $\|v\|_\infty\leq K$.

Set $q_L=L+2K+1$ and $\widetilde B_L=B_L+q_L\mathbb Z^2$. Differences within one period block lie in $B_L-B_L$. A difference between distinct blocks has a coordinate of absolute value at least
\[
 q_L-(L-1)=2K+2>K.
\]
Thus $(\widetilde B_L-\widetilde B_L)\cap H_K=\varnothing$. On the other hand,
\[
 d^\star(\widetilde B_L)=\frac{|B_L|}{q_L^2}
 \geq\delta\left(\frac{L}{L+2K+1}\right)^2
 \geq\delta/4
\]
for all sufficiently large $L$. Theorem~\ref{thm:main}, applied to the integer set $\widetilde B_L$ with threshold $\delta/4$, gives $K\in P(\widetilde B_L)$, a contradiction.
\end{proof}

\begin{proof}[Proof of Corollary~\ref{rational1}]
Let $\delta=d^\star(E)>0$ and $K=K_{\mathbb Z}(\delta/4)$ as in Lemma~\ref{lem:rational-transfer}. Fix $q\in\mathbb Q\setminus\{0\}$ and set
\[
 E'=L_{K/q}(E),\qquad L_{K/q}(x,y)=\left(\frac Kq x,y\right).
\]
This is an automorphism of the discrete additive group $\mathbb Q^2$, so Lemma~\ref{lem:density-facts} gives $d^\star(E')=\delta$. Lemma~\ref{lem:rational-transfer}, now applied to $E'$, yields $K\in P(E')$. We do not apply the integer theorem directly to $E'$.

Since $E'-E'=L_{K/q}(E-E)$, we have
\[
 P(E')=\frac Kq P(E).
\]
Thus $K\in(K/q)P(E)$ implies $q\in P(E)$. This works for every nonzero rational $q$, and $0\in P(E)$ because $E\neq\varnothing$. Therefore $P(E)=\mathbb Q$.
\end{proof}

\begin{proof}[Proof of Corollary~\ref{rational2}]
By Lemma~\ref{lem:density-facts}, $E=E_1\times E_2$ has positive upper Banach density in $\mathbb Q^2$. The map $T(x,y)=(x-y,x+y)$ is an additive group automorphism, with inverse
\[
 T^{-1}(u,v)=\left(\frac{u+v}{2},\frac{v-u}{2}\right).
\]
Thus $E'=T(E)$ has the same positive density. Corollary~\ref{rational1} gives $P(E')=\mathbb Q$, and linearity gives
\[
 P(E')=\{(x-y)(x+y):x\in E_1-E_1,\ y\in E_2-E_2\}
       =(E_1-E_1)^2-(E_2-E_2)^2.\qedhere
\]
\end{proof}

\begin{proof}[Proof of Corollary~\ref{rational3}]
The automorphism $T(x,y)=(x-y,x+y)$ preserves upper Banach density by Lemma~\ref{lem:density-facts}. Apply Corollary~\ref{rational1} to $E'=T(E)$. Since $E'-E'=T(E-E)$,
\[
 \mathbb Q=P(E')=\{x^2-y^2:(x,y)\in E-E\}.\qedhere
\]
\end{proof}

\vspace{.5in}

\noindent
\textbf{Sayan Goswami}\\
Ramakrishna Mission Vivekananda Educational and Research Institute, Belur Math,\\
Howrah, West Bengal--711202, India.\\
Email: \href{mailto:sayan92m@gmail.com}{sayan20math@gmail.com}\\

\noindent
\textbf{Chunlin Liu}\\
School of Mathematical Sciences, Dalian University of Technology,
Dalian 116024, P.~R. China;\\
and Institute of Mathematics, Polish Academy of Sciences,
ul. \'{S}niadeckich 8, 00-656 Warsaw, Poland\\
Email: \href{mailto:chunlinliu@mail.ustc.edu.cn}{chunlinliu@mail.ustc.edu.cn};
\href{mailto:chunlin.liu.math@gmail.com}{chunlin.liu.math@gmail.com}

\end{document}